\newcommand{\ga}{\ensuremath{\alpha}}
\newcommand{\gb}{\ensuremath{\beta}}
\newcommand{\gd}{\ensuremath{\delta}}
\renewcommand{\ge}{\ensuremath{\varepsilon}}
\newcommand{\gf}{\ensuremath{\varphi}}
\renewcommand{\gg}{\ensuremath{\gamma}}
\newcommand{\gh}{\ensuremath{\eta}}
\newcommand{\gl}{\ensuremath{\lambda}}
\newcommand{\gm}{\ensuremath{\mu}}
\newcommand{\gs}{\ensuremath{\sigma}}
\newcommand{\gt}{\ensuremath{\tau}}
\newcommand{\gy}{\ensuremath{\psi}}
\newcommand{\C}{\mathbb{C}}
\newcommand{\Q}{\mathbb{Q}}
\newcommand{\R}{\mathbb{R}}
\newcommand{\Z}{\mathbb{Z}}
\newcommand{\lto}{\longrightarrow}
\newcommand{\mto}{\mapsto}
\newcommand{\co}{\colon\,}
\newcommand{\st}{ \: | \: }
\newcommand{\codim}{\text{codim}}
\newcommand{\gD}{\ensuremath{\Delta}}
\newcommand{\gF}{\ensuremath{\Phi}}
\newcommand{\gG}{\ensuremath{\Gamma}}
\newcommand{\gL}{\ensuremath{\Lambda}}
\newcommand{\mc}[1]{\ensuremath{\mathcal{#1}}}
\newcommand{\msc}[1]{\ensuremath{\mathscr{#1}}}
\newcommand{\bs}{\backslash}
\newtheorem{theorem}{Theorem}[section]
\newtheorem{lemma}[theorem]{Lemma}
\theoremstyle{definition}
\newtheorem{definition}[theorem]{Definition}
\newtheorem{proposition}[theorem]{Proposition}
\newtheorem{corollary}[theorem]{Corollary}
\theoremstyle{remark}
\newtheorem{remark}[]{Remark}
\newtheorem*{notation}{Notation}
\theoremstyle{problem}
\begin{document}

\title[Limits with braid arrangements]{Limits with braid arrangements}


\author{Matthew S. Miller}
\address{Department of Mathematics, Vassar College, Poughkeepsie, NY 12604,USA}
\curraddr{}
\email{mamiller@vassar.edu}

\author{Max Wakefield}
\address{Department of Mathematics, United States Naval Academy, Annapolis, MD, 21402 USA}
\curraddr{}
\email{wakefiel@usna.edu}

\subjclass[]{}

\date{}

\dedicatory{}

\begin{abstract}
We define a monoid structure on the set of $k$-equal arrangements and use this structure to define limits of braid arrangements.  We compute the cohomology of the associated limits of rational models of the arrangements complex complements. We collect these complexes together into one complex by creating a new differential and product on their direct sum and show that the resulting complex is exact. 
\end{abstract}

\maketitle



\section{Introduction}

Artin's braid groups appear in numerous fields of mathematics as well as in applications to many different areas of science. The recent and well written books \cite{KT} by Kassel and Turaev and \cite{BCH} edited by Berrick, Cohen and Hanbury have popularized the study of braids to a wide audience. There are many different ways to define the braid groups. We focus on the perspective of the braid group as the fundamental group of the configuration space $\mathrm{Conf} (\ell,\R^2)$ of $\ell$ points in $\R^2$ (which has an interesting history see \cite{Mag-74}). In this note, we view the configuration space $\mathrm{Conf} (\ell ,\R^2)$ as the complement of the following arrangement of complex hyperplanes in $\C^\ell$ $$\mc{A}_\ell =\left\{ H_{ij}|1\leq i<j\leq \ell \right\}$$ modulo the symmetric action on the coordinates where $H_{ij}=\mathrm{er} \{x_i-x_j\}$. For this reason $\mc{A}_\ell$ is called the braid arrangement and has arisen in many other contexts including the Type A reflection groups (see \cite{OT}) and even as in a generalized proof of Arrow's Impossibility Theorem in statistical economics (see \cite{T-arrow}).

The primary aim of this study is to understand some limiting behavior of the cohomology of the complex complements of the braid arrangements. In order to do this we use certain subspace arrangements that are subarrangements of the braid arrangement called $k$-equal arrangements defined explicitly in Section \ref{prelim} below. These arrangements can also be related to configuration spaces as the configurations of $\ell$ points where up to $k-1$ of them are allowed to collide. 

The $k$-equal arrangements also have an interesting history. They were first defined in the work of Bj\"orner, Lov\'asz, and Yao in \cite{BLY-k-equal} on the computational complexity of linear decision trees. Then in \cite{BW-95} Bj\"orner and Welker studied these arrangements exclusively, particularly the cohomology of their real and complex complements, by using the monumental work of Goresky and MacPherson in \cite{GM88} on stratified Morse theory. In \cite{BW-95}, they develop very complicated recursive formulas for computing the Betti numbers of these arrangement complements. While these formulas are brilliant, they are very difficult to compute with and are not interpreted in simple combinatorial terms, both of which are goals in understanding arrangements. 

We undertook this work as a part of a larger study of the structure of the cohomology and rational formality of the complex complements of $k$-equal arrangements.  Our plan is to use these results to prove further theorems about $k$-equal arrangements.  Since \cite{BLY-k-equal} and \cite{BW-95}, many others have studied $k$-equal arrangements. In \cite{Yuz-SmalRat}, Yuzvinsky further develops the rational models created in \cite{DCP95} by De Concini and Procesi which are quasi-isomorphic to Morgan's rational models in \cite{M-78}. Then in \cite{Yuz-Small}, Yuzvinsky uses these models to present a new basis for the cohomology of the complex complement. The works by Bj\"orner and Wachs in \cite{BW-96}, de Longueville and Schultz in \cite{dLS}, Peeva, Reinner and Welker in \cite{PRW-99}, and Ziegler and \v{Z}ivaljevi\'c in \cite{ZZ93} have further refined our understanding of the Betti numbers and algebraic structure of the cohomology of $k$-equal arrangements. However, much is still unknown.

Another focus in the study of the topology of subspace arrangements is the question of rational formality. A direct consequence of Brieskorn's landmark result in \cite{B-73} is the formality of the complex complement of hyperplane arrangements.  More recently Feitchner and Yuzvinsky in \cite{FY-Formal} show that any subspace arrangement with a geometric intersection lattice has a rationally formal complex complement. Then Matei and Suciu in \cite{MS-00} found non-trivial Massey products in some complicated arrangements of real planes in $\R^4$, hence giving the first non-formal arrangement complement. Using the work of Baskakov in \cite{Baskakov-03} on moment angle complexes, Denham and Suciu in \cite{DS-Massey} demonstrate non-trivial Massey products on complements of subspace arrangements consisting entirely of coordinate subspaces. 

Then the authors in \cite{MW-Edge} presented a simple set of combinatorial criteria for non-trivial Massey products in subarrangements of the braid arrangement.  In \cite{MW-Pascal} the authors explicitly compute the cohomology rings a family of subspace arrangements of the braid arrangement whose intersection lattices are not geometric yet they support formal complex complements (other examples can be found in \cite{FY-Formal}). Most recently in \cite{M-k-equal} the first author demonstrates non-trivial Massey products in the $k$-equal arrangements with $k=3$ and $\ell \geq 7$.

The focus of this study is to define a limit of $k$-equal arrangements and their associated relative atomic complexes, collect these complexes together and uncover their algebraic and combinatorial structure. There are two main results. The first, Theorem \ref{homology}, presents a generating set for the homology of the limiting complexes which turns out to be much smaller than that in \cite{Yuz-SmalRat}. The second, Theorem \ref{decomp}, shows that under another differential the direct sum of these limiting complexes is exact. 

The flow of this paper is as follows: In Section \ref{prelim} we examine the relative atomic complexes specifically for $k$-equal arrangements and develop some notation. In Section \ref{arrts} we define a monoid structure on the set of $k$-equal arrangements and use this monoid structure to construct limits of the braid arrangements. Then in Section \ref{alg} we study the limits of the relative atomic complexes, compute homology of these complexes, and develop basic degree and algebra structures. In Section \ref{Bi} we sum all the limiting complexes together, define a new differential on this total complex, and the compute its homology. Finally, in Section \ref{gproduct} we define a graded product on the sum of all the limiting complexes and describe its connection to the previous differentials.

\begin{flushleft} {\bf Acknowledgments:} The authors are grateful to John McCleary for his inspiration that led to this paper and many useful discussions. The second author has been supported by the Simons Foundation and the NSF.\end{flushleft}


\section{Relative Atomic Complexes of $k$-equal arrangements}\label{prelim}

Let $V$ be an $\ell$ dimensional complex vector space. 

\begin{definition}
Let $\gs \subset [\ell] = \{1,2, \ldots, \ell\}$.  Define the subspace $X(\gs)$ as
$$X(\gs) = \{ (v_1 , \ldots , v_\ell) \in V \, | \, v_{i} = v_{j} \text{ for all } i,j \in \gs \}.$$
For a fixed $k$ with $\ell \geq k \geq 2$, we denote by \msc{S} the set $\{ \gs \subset [\ell] \, | \, |\gs|=k\}$.  The {\em $k$-equal arrangement}, $\mc{A}_{k,\ell}$ is the collection
$$ \mc{A}_{k,\ell} =\{ X(\gs) \, | \, \gs \in\msc{S} \}.$$
The complement of $\mc{A}_{k,\ell}$ is denoted by $M_{k,\ell}$, is defined by $M_{k,\ell} = V \setminus \left( \bigcup_{\gs \in \msc{S}} X(\gs) \right)$ and is called a {\em $k$-equal manifold}.
\end{definition}

Notice that in this notation $\mc{A}_{2,\ell}=\mc{A}_\ell$ is the braid arrangement.

\begin{notation}
We associate the set $\gs$ with the subspace $X(\gs)$. Then we use $S$ to denote an ordered subset of atoms in the labeled intersection lattice of $\mc{A}_{k,\ell}$, so
$$S =\{ \gs_1<\gs_2 \ldots < \gs_r\} \subseteq \msc{S}.$$
We often consider the intersection of a collection of subspaces associated to such a subset and so introduce the notation $X(S)$ to denote $\bigcap_{\gs \in S} X(\gs)$.
\end{notation}

\begin{definition}
Fix $\ell$ and $k$.  Let $S= \{ \gs_1, \ldots \gs_r \}$ and let $a_S \in A_{\ell-k}$.  Let $\gL = \{\gl_1 , \ldots , \gl_r\}$ where $\gl_j \subset [\ell]$ with the properties $\gl_j \cap \gs_j$ is empty and $\gl_j \cup \gs_j = [\ell]$.  We call $\gl_j$ the complement of $\gs_j$ and write $\gl_j = \gs_j^c$.  Further $\gL$ the complement of $S$ and write $\gL = S^c$.
\end{definition}

\begin{notation} 
In addition to the notation $X(S)$, it is convenient to use $\gL = S^c$ to denote the same subspace.  We therefore set $Y(\gL) = X(S)$.
\end{notation}

We make frequent use of Yuzvinsky's relative atomic complex, \cite{Yuz-SmalRat}, a rational homotopy model for the complement of a complex subspace arrangement.
\begin{definition} Let $A_{k,\ell}$ be the differential graded algebra over $\Q$ with a generator $a_{S}$ with degree
\begin{align}\label{deg}
\text{deg} (a_S) = 2 \, \text{codim} (X(S)) - | S |
\end{align}
 for each subset $S \subset \msc{S}$.  The differential in $D_{k,\ell}$ is defined by
\begin{gather} \label{d}
d a_{S} = \sum (-1)^j a_{S \bs \gs_j},
\end{gather}
where the sum is over $\gs_j \in S$ with $X(S \bs \gs_j) = X(S)$.  The product structure of $A_{k,\ell}$ is defined by
\begin{gather} \label{product}
a_{S} a_{T} =
\begin{cases}
(-1)^{\ge(S,T)} a_{S \cup T} & \codim \big(X(S)\big) + \codim \big(X(T)\big) = \codim \big(X(S \cup T)\big) \\
0 & \text{otherwise},
\end{cases}
\end{gather}
where $\ge(S,T)$ is the sign associated with the permutation that re-orders $S \cup T$ so that the elements of $T$ come after that of $S$. The algebra $A_{k,\ell}$ is the {\em relative atomic complex for the arrangement $\mc{A}_{k,\ell}$}.
\end{definition}

\begin{notation}
Again it is useful to use $\gL = S^c$ to denote elements of $A_{k,\ell}$ and set $b_\gL = a_S$. For a set of sets $S$, we write $\cap S$ for $\bigcap\limits_{s \in S} s$  and $\cup S$ for $\bigcap\limits_{s \in S} s$.
\end{notation}

Throughout this paper we use the notation setup in this section.  In particular, we attempt to use the symbols defined here only to represent the quantities they describe in this section.


\section{Limits of arrangements}\label{arrts}

The standard direct product of complex vector spaces,

\begin{align}
\C^\ell \times \C^m & \to \C^{\ell+m}\\
(x_1 , \ldots , x_\ell) \times (y_1, \ldots , y_m) & \to (x_1 , \ldots , x_\ell , y_1 , \ldots , y_m)\\
\intertext{give rise to a monoid structure on the set $\{\mc{A}_{k,\ell} \, | \, 0 < k \leq \ell \}$ defined by}
\mc{A}_{k,\ell} \times \mc{A}_{t,m} & \to \mc{A}_{k+t,\ell+m} \\
X(\gs) \times X(\gt) & \to X(\gs \cup \gt^{+m}) 
\end{align}

where for $\gt = \{ j_1 , \ldots , j_t\}$, we use the notation $\gt^{+m}$ to denote the set $\{ j_1+m , \ldots , j_t + m \}$.

In particular, if we take $\mc{A}_{1,1}$ to be $\{\C\}$, with an element of $\C$ considered to be ``equal", there is a multiplication on the right by $\mc{A}_{1,1}$ which takes $\vec x$ to $(\vec x, y)$ and maps $X(\gs)$ to $X(\gs \cup \{ \ell +1\})$.  Using the complement notation this multiplication maps $Y(\gl) \in \mc{A}_{k, \ell}$ to $Y(\gl) \in \mc{A}_{k+1,\ell+1}$.  So $Y(\gL)$ makes sense in any $\mc{A}_{k,\ell}$ provided that $\ell$ is greater than or equal to the maximum element in $\cup_{\gl \in \gL} \gl$.


We can form the direct limit of the following system
\begin{align}
\mc{A}_{2, \ell-k+2} \to \cdots  \to \mc{A}_{k-1,\ell-1} \to \mc{A}_{k,\ell} \to \mc{A}_{k+1,\ell+1} \to \cdots
\end{align}

where each map is multiplication on the right by $\mc{A}_{1,1}$.  Since the difference $\ell-k$ remains fixed we denote the direct limit by $\mc{A}^\infty_{\ell-k}$.  Moreover, the equivalence classes of $\mc{A}^\infty_{\ell-k}$ are represented by the set of $Y(\gL)$.

The following diagram commutes.
$$
\xymatrix @C=0cm{
\mc{A}_{k,\ell} \ar@{=}[dd] & \times  & \mc{A}_{t,m} \ar[dd]^{\times \mc{A}_{1,1}} \ar[rrrrrr] &&&&&& \mc{A}_{k+t,\ell+m} \ar[dd]^{\times \mc{A}_{1,1}} \\ \\
\mc{A}_{k,\ell} & \times & \mc{A}_{t+1,m+1} \ar[rrrrrr]&&&&&& \mc{A}_{k+t+1,\ell+m+1}
}
$$

So there is a left action of $\mc{A}^\infty_{k,\ell}$ on the set of all $\mc{A}_q$ by
\begin{align}
\mc{A}_{k,\ell} \times \mc{A}^\infty_q \to \mc{A}^\infty_{q + \ell - k}.
\end{align}


\section{Limits of relative atomic complexes}\label{alg}

All of these arrangement maps induce maps on relative atomic complexes.  One must only replace $\mc{A}$ with $A$, and $X(\gs)=Y(\gl)$ with $a_\gs=b_\gl$.  The equivalence classes of the limiting complex, $\mathfrak{A}_q$, can be represented by the set of all $b_\gL$.


\begin{remark}
If we multiply on left instead of the right, we can get similar results with all of the left and right sides switched.
\end{remark}

Multiplying an element $a_\gs\in A_{k,\ell}$ by $A_{1,1}$ raises the degree, but as with the arrangements, using the complimentary representatives $b_\gl \in A_{k,\ell}$ there is a notion of degree that is constant after some finite stage.
\begin{definition}
Define the codegree of $b_\gL \in A_{k,\ell}$ to be
\begin{align}
\text{codeg}(b_\gL) & = \big[ \text{maximal degree of } A_{k,\ell} \big] - \text{deg}(b_\gL)
\end{align}
\end{definition}

The codegree can be expressed using the set $\gL$ and $k$ and $\ell$.
\begin{proposition}
Let $b_\gl \in A_{k,\ell}$, then
\begin{align} \text{codeg}(b_\gL)= 2 \left| \cap \gL \right| + |\gL| - \left\lceil \frac{\ell-1}{k-1}\right\rceil.
\end{align}
\end{proposition}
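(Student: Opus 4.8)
The plan is to unwind the definition $\text{codeg}(b_\gL) = M - \deg(b_\gL)$, where $M$ denotes the maximal degree of $A_{k,\ell}$, and to evaluate both $M$ and $\deg(b_\gL)$ in purely combinatorial terms. Writing $\gL = S^c$ with $S = \{\gs_1,\ldots,\gs_r\}$, recall that $b_\gL = a_S$, so $|\gL| = |S| = r$ and, by (\ref{deg}), $\deg(b_\gL) = 2\,\codim(X(S)) - |\gL|$. Hence the proposition is equivalent to the single identity
\begin{equation*}
M = 2\,\codim(X(S)) + 2|\cap\gL| - \left\lceil\tfrac{\ell-1}{k-1}\right\rceil,
\end{equation*}
and the work splits into two independent computations: determining $M$, and expressing $\codim(X(S))$ and $|\cap\gL|$ through $\cup S$.

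First I would compute the maximal degree. Each generator $a_S$ determines a partition of $[\ell]$ whose non-singleton blocks are the connected components of the hypergraph with vertex set $\cup S$ and hyperedges the $\gs_j$; if this hypergraph has $c$ components then $\codim(X(S)) = |\cup S| - c$, so $\deg(a_S) = 2(|\cup S| - c) - |S|$. To maximize I would observe that connecting a hypergraph on $n$ vertices by $k$-element edges requires at least $\lceil (n-1)/(k-1)\rceil$ edges, since each edge beyond the first attaches at most $k-1$ new vertices, and that this bound is attained. A short superadditivity estimate for the function $n \mapsto 2(n-1) - \lceil(n-1)/(k-1)\rceil$ then shows that merging two components strictly raises the degree, so the maximum is realized by a single connected block on all of $[\ell]$ carried by a minimal spanning family, giving $M = 2(\ell-1) - \lceil(\ell-1)/(k-1)\rceil$.

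Next I would compute $\codim(X(S))$ for the generator at hand. Complementation gives $\gs_j = [\ell]\bs\gl_j$, whence $\cap\gL = \bigcap_j([\ell]\bs\gs_j) = [\ell]\bs\cup S$ and $|\cap\gL| = \ell - |\cup S|$. The remaining point is that the hypergraph $S$ is connected, so that $\codim(X(S)) = |\cup S| - 1$: any two edges $\gs_i,\gs_j$ satisfy $\gs_i\cap\gs_j = [\ell]\bs(\gl_i\cup\gl_j)$, which is nonempty as soon as $\ell$ exceeds $|\gl_i \cup \gl_j|$, i.e.\ once we are past the finite stage at which the codegree stabilizes, and a pairwise-intersecting family of edges is connected. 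Substituting $\codim(X(S)) = |\cup S| - 1$ and the value of $M$ into the displayed identity collapses the $\cup S$ terms and yields $\text{codeg}(b_\gL) = 2|\cap\gL| + |\gL| - \lceil(\ell-1)/(k-1)\rceil$.

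I expect the main obstacle to be the determination of $M$: both the lower bound $\lceil(n-1)/(k-1)\rceil$ on the size of a connected $k$-uniform spanning family and the verification that a single block dominates several disjoint ones rest on the extremal counting above, and it is this ceiling that produces the $\lceil(\ell-1)/(k-1)\rceil$ term. The connectivity invoked in the codimension computation is the other delicate point, since it is exactly what can fail at small $\ell$ and is recovered only in the stable range where the codegree is defined.
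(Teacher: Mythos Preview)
Your approach is essentially the paper's: write $\text{codeg}(b_\gL)$ as the maximal degree minus $\deg(b_\gL)=2\,\codim(X(S))-|S|$, and then use the De~Morgan identity $|\cap\gL|=\ell-|\cup S|$ together with $\codim(X(S))=|\cup S|-1$ to collapse everything to the stated formula. You provide more detail than the paper on two points the paper treats as given---the value $M=2(\ell-1)-\lceil(\ell-1)/(k-1)\rceil$ of the maximal degree, and the connectedness of the hypergraph $S$ needed for the codimension formula---but the skeleton of the argument is identical.
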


\begin{proof}
\begin{align}
\text{codeg}(b_\gL) & = \big[ \text{maximal degree of } A_{k,\ell} \big] - \text{deg}(b_\gL) \\
& = \left[ 2(\ell-1) - \left\lceil \frac{\ell-1}{k-1} \right\rceil \right] - \left[ 2 \text{codim}(X(S)) - |S| \right] \\
& = 2 \left[ \text{dim}(X(S)) - 1 \right] - \left[ \left\lceil \frac{\ell-1}{k-1}\right\rceil - |S| \right]
\end{align}
Now we calculate the dimension of $X(S)$ in terms of $\gl$.
\begin{align}
\text{dim}(X(S)) & = \ell - \left( \left| \bigcup_{\gs \in S} \gs \right| - 1 \right) \\
& = \ell - \left| \bigcup_{\gl \in \gL} \gl^c \right| + 1\\
& = \left| \left(\bigcup_{\gl \in \gL} \gl^c\right)^c \right| +1 \\
& = \left| \bigcap_{\gl\in\gL} \gl \right| = |\cap \gL|
\end{align}
\end{proof}

After the finite stage of the sequence when $\left\lceil \frac{\ell-1}{k-1}\right\rceil = 2$ the codegree of $b_\gL$ is constant.  Therefore we can make the following definition.

\begin{definition}
Let codegree of $b_\gL \in \mathfrak{A}_{q}$ be
\begin{align}
\text{codeg}(b_\gL) = 2 \big( \left| \cap \gL \right| -1 \big) + |\gL| .
\end{align}
\end{definition}

We also define a differential on the limiting complex $\mathfrak{A}_{q}$ that is the limit of the differential in the relative atomic complexes in the directed system.

\begin{definition}
$$d(b_\gL) = \sum (-1)^j b_{\gL \setminus \gl_j}$$
where the sum is over all $\gl_j \in \gL$ with $\cap\gL = \bigcap( \gL \setminus \gl_j )$.
\end{definition}

In order to compute the homology of $\mc{A}_{q}$ with respect to $d$ we need some notation and a lemma.  Let $S$ be a set of atoms in $\mc{A}_{q+k,k}$ for some $k$.  For $\gs \in S$, let $\mc{F}(\gs)$ be the set of $i \in\gs$ that are not contained in any $\gt \in S\bs\gs$.  Let $\mc{P}(S)$ be a subset of $\cup S$ that has the following properties:
\begin{enumerate}[label=(\roman*)]
\item $|\mc{P}(S)|=k-1$
\item $(\cap S) \subset \mc{P}(S)$
\item \label{finger} $\mc{F}(\gs) \bs (\mc{P}(S) \cap \mc{F}(\gs))$ is non-empty for every $\gs \in S$.
\end{enumerate}
There may be more than one set that satisfies the required properties for $\mc{P}(S)$, or there may no such sets.

The next lemma describes the basic homologies we need for the proof of the next theorem.
\begin{lemma}\label{tohands}
Let $S$ be an independent set of atoms in $A_{q+k,k}$ for which there exists a set $\mc{P}(S)$ satisfying the conditions above.  Assume also that every pair of atoms in $S$ have a non-empty intersection and $\mc{F}(\gs)$ is non-empty for every $\gs \in S$.  Then for
$$\tilde \gs = \mc{F}(\gs) \cup \left( \cap S \right) \cup T$$
with $|\tilde \gs|=k$ and $T \subset \mc{P}(S)$, there is a homology between $a_S$ and $ \pm a_{(S\bs\gs ) \cup \tilde \gs}$.
\end{lemma}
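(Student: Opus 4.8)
The plan is to realize the asserted homology as a single application of the differential. Concretely, I would form the enlarged set $R = S \cup \{\tilde\gs\}$ and compute $d(a_R)$, with the goal of proving that the only surviving terms are $\pm a_S$ and $\pm a_{(S\bs\gs)\cup\tilde\gs}$; the resulting relation $d(a_R) = \pm a_S \pm a_{(S\bs\gs)\cup\tilde\gs}$ then exhibits $a_S$ and $\pm a_{(S\bs\gs)\cup\tilde\gs}$ as homologous. A preliminary degree check confirms that $a_R$ is the right witness: once $X(R)=X(S)$ is known, the degree formula \eqref{deg} gives $\deg(a_R)=\deg(a_S)-1$, so $d(a_R)$ lands in the common degree of $a_S$ and $a_{(S\bs\gs)\cup\tilde\gs}$.

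First I would extract the geometric content of the hypothesis that every pair of atoms of $S$ meets: the intersection graph of $S$ is then complete, so the hypergraph $S$ is connected and $\cup S$ is a single block of the partition defining $X(S)$. Since $\mc{F}(\gs)\subseteq\gs$, $\cap S\subseteq\gs$, and $T\subseteq\mc{P}(S)\subseteq\cup S$, we have $\tilde\gs\subseteq\cup S$, so every vertex of $\tilde\gs$ lies in this one block. Hence $X(R)=X(S)\cap X(\tilde\gs)=X(S)$, which shows $\tilde\gs$ is redundant in $R$ (deleting it returns $S$ without changing the subspace) and therefore contributes the term $\pm a_S$ to $d(a_R)$.

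Next I would check that $\gs$ is likewise redundant in $R$, i.e. that $X((S\bs\gs)\cup\tilde\gs)=X(S)$, which supplies the second term $\pm a_{(S\bs\gs)\cup\tilde\gs}$. Here $\cup S=\cup(S\bs\gs)\cup\mc{F}(\gs)$, the set $S\bs\gs$ is itself connected (again by pairwise intersection) with block $\cup(S\bs\gs)$, and $\tilde\gs$ both covers all of $\mc{F}(\gs)$ and shares the core $\cap S$ with $\cup(S\bs\gs)$; so $\tilde\gs$ reattaches the free vertices of $\gs$ and $(S\bs\gs)\cup\tilde\gs$ again has $\cup S$ as one block. This is the step where independence is essential: together with pairwise intersection it forces each atom to attach to the union of the others through a common vertex, so the core $\cap S$ is nonempty and provides the needed overlap (if desired, elements of $T\subseteq\mc{P}(S)$ lying in $\cup(S\bs\gs)$ give the same reconnection).

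The crux, and the step I expect to be the main obstacle, is ruling out any other redundant atom of $R$, so that $d(a_R)$ has exactly the two terms above. For a fixed $\gt\in S\bs\gs$, condition \ref{finger} furnishes a vertex $v\in\mc{F}(\gt)\bs\mc{P}(S)$. Being free to $\gt$, $v$ lies in no other atom of $S$; and since $v\notin\mc{P}(S)$ and $v\notin\gs$ we get $v\notin T$, $v\notin\cap S$, and $v\notin\mc{F}(\gs)$, so $v\notin\tilde\gs$ as well. Thus $v$ occurs only in $\gt$ among the atoms of $R$, whence $X(R\bs\gt)\neq X(R)$ and $\gt$ is not redundant. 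Combining the three steps yields $d(a_R)=\pm a_S \pm a_{(S\bs\gs)\cup\tilde\gs}$, the claimed homology. The remaining bookkeeping is routine: $\tilde\gs\notin S$ because $\mc{F}(\gs)\neq\emptyset$ forces $\tilde\gs$ to contain a vertex in no $\gt\in S\bs\gs$, and the sign $\pm$ is read off from the positions of $\gs$ and $\tilde\gs$ in the ordering via \eqref{d}.
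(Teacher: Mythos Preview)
Your approach is exactly the paper's: set $R=S\cup\tilde\gs$ and show $d(a_R)=\pm a_S\pm a_{(S\bs\gs)\cup\tilde\gs}$ by checking that $\tilde\gs$ and $\gs$ are redundant in $R$ while each $\gt\in S\bs\gs$ is not. Your verification is in fact more explicit than the paper's, in particular your use of condition~\ref{finger} to produce a vertex of $\mc{F}(\gt)$ outside $\tilde\gs$.

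One small correction: the assertion that independence plus pairwise intersection forces $\cap S\neq\emptyset$ is false (take $S=\{\{1,2,3\},\{1,4,5\},\{2,4,6\}\}$ with $k=3$, which satisfies all hypotheses of the lemma and admits $\mc{P}(S)=\{1,2\}$, yet has $\cap S=\emptyset$). Your parenthetical is the right repair and should be promoted to the main argument: since $\tilde\gs\subseteq\cup S=\cup(S\bs\gs)\cup\mc{F}(\gs)$ and $|\tilde\gs|=k>|\mc{F}(\gs)|$ (the strict inequality holding because $\gs$ meets some other atom when $|S|\geq2$), the set $\tilde\gs$ must meet $\cup(S\bs\gs)$, which reconnects the hypergraph and gives $\vee\big((S\bs\gs)\cup\tilde\gs\big)=\vee S$.
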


\begin{proof}
To ensure that $\tilde \gs$ exists we note that there is such a $T \subset \mc{P}(S)$ since $\mc{F}(\gt)$ is non-empty and $|\mc{P}(S)| = k-1$.  Since $\mc{P}(S) \subset (\cup S)$, from the construction of $\tilde\gs$ we deduce that
$$\vee (S\cup \tilde \gs) = \vee S = \vee \left((S\bs \gs ) \cup \tilde \gs\right).$$
Moreover, for every $\gt \in S$ with $\gt \neq \gs$ we know $\mc{F}(\gt)$ is non-empty, so
 $$\vee ((S \cup \tilde \gs ) \bs \gt) \neq \vee (S\cup \tilde \gs).$$
Now from the definition of the differential we conclude that 
 $$d(a_{S\cup \tilde \gs} )= a_S \pm a_{(S\bs \gs ) \cup \tilde \gs}$$ 
and hence $a_S$ is homologous to $\pm a_{(S\bs \gs ) \cup \tilde \gs}$.
\end{proof}

\begin{theorem} \label{homology}
The homology of $\mathfrak{A}_{q}$ is generated by the set of equivalence classes of $a_S \in A_{q+k,k}$ such that $q\leq k-1$ and for some choice of $\mc{P}(S)$ 
\begin{align}\label{hand}
\cup S= \mc{P}(S) \cup \left( \bigcup_{\gs \in S} \mc{F}(\gs) \right)
\end{align}
and $|\mc{F}(\gs)| = 1$ for all $\ga \in S$.
\end{theorem}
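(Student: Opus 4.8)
The plan is to work at a finite stage of the direct system and reduce an arbitrary homology class to the asserted ``star'' generators, with Lemma \ref{tohands} as the central tool. First I would exploit the hypothesis $q \le k-1$ twice. On one hand it is exactly the condition $\lceil (\ell-1)/(k-1)\rceil = 2$ at which the codegree stabilizes, so the homology of $\mathfrak{A}_q$ in each codegree is already computed by $A_{q+k,k}$ for any $k \ge q+1$. On the other hand, in this range $\ell = q+k < 2k$, while every non-singleton block of an atom set carries at least one $k$-atom and hence at least $k$ elements; two disjoint such blocks would need at least $2k > \ell$ elements. Thus in the stable range each generator $a_S$ is supported on a single connected block $B = \cup S$, the complex splits as a direct sum over the choice of $B$ (equivalently over the flat $X(S)$), and it suffices to treat one block at a time.

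Next I would record that the asserted generators are precisely the \emph{stars}. Indeed $|\mc{P}(S)| = k-1$, $|\mc{F}(\sigma)| = 1$ and $\cup S = \mc{P}(S)\cup\bigcup_{\sigma}\mc{F}(\sigma)$ force every non-finger element of each atom into $\mc{P}(S)$, hence $\cap S = \mc{P}(S)$ and $\sigma = (\cap S)\cup\{f_\sigma\}$ for distinct leaves $f_\sigma$; so a star is a common $(k-1)$-core together with $|S|$ distinct leaves, it is automatically an independent cycle, and it is a fixed point of Lemma \ref{tohands} (taking $\mc{P}(S)=\cap S$, the move $\sigma \mapsto \tilde\sigma$ is trivial). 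To show these span, I would represent a class by a cycle and induct on the \emph{excess} $e(S) = |B| - (k-1) - |S| \ge 0$ of its terms, which vanishes exactly on stars. Lemma \ref{tohands} supplies one half of the machinery: whenever a term is pairwise intersecting with non-empty fingers and admits a valid $\mc{P}(S)$, each atom may be replaced by its minimal representative $\tilde\sigma \subseteq \mc{P}(S)\cup\mc{F}(\sigma)$ without leaving the homology class, canonicalizing the support while leaving $|B|$, $|S|$, and hence $e(S)$ unchanged.

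The main obstacle is lowering the excess itself, since Lemma \ref{tohands} preserves both $B$ and $e(S)$ and moreover presupposes pairwise intersection, which connectivity does not provide: a chain such as $\{1,2,3\},\{3,4,5\},\{5,6,7\}$ for $k=3$ is a connected independent cycle with $|B|=7 > (k-1)+|S| = 5$. For such terms the reduction must come from the differential (\ref{d}). Concretely, I would enlarge $S$ by a carefully chosen atom $\rho \subset B$ so that $a_S$ occurs in $d\,a_{S\cup\{\rho\}}$ while every remaining boundary term either has strictly smaller excess, repeats an atom (so it vanishes or shortens $S$), or becomes pairwise intersecting and is finished off by the lemma; iterating this contracts chains toward a common core and splits multi-element fingers into single leaves, driving $e$ to $0$. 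Producing such a $\rho$ in every positive-excess situation, and proving the resulting induction terminates — i.e. exhibiting the explicit boundaries that trade excess for stars — is the technical heart of the argument and the step I expect to be hardest. The fact, established above, that stars are exactly the fixed points of Lemma \ref{tohands} and the unique terms of zero excess is what guarantees this process has the right target.
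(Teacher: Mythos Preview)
Your outline has the right shape but two concrete problems, and together they mean your ``hardest step'' is not the step at all.

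First, your main obstacle is a phantom. In the stable range $q\le k-1$ one has $\ell=q+k\le 2k-1<2k$, so any two $k$-subsets of $[\ell]$ intersect by pigeonhole; your chain $\{1,2,3\},\{3,4,5\},\{5,6,7\}$ has $\ell\ge 7$ and $q\ge 4>k-1$, hence lives \emph{outside} the range you already restricted to. Thus the pairwise-intersection hypothesis of Lemma~\ref{tohands} is automatic once you are at a stage with $q\le k-1$, and the only remaining hypothesis to check is that every $\mc{F}(\sigma)$ is non-empty. This follows from independence of $S$: removing any $\sigma$ must change the join, and since the hypergraph has a single component it cannot split, so it must lose a vertex---that vertex lies in $\mc{F}(\sigma)$. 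You never invoke independence, and that is the real gap: the paper's proof starts by citing Yuzvinsky's Theorem~8.8 in \cite{Yuz-Small}, which says the homology of each $A_{\ell,k}$ is generated by $a_S$ with $S$ independent. Without this (or a substitute) you cannot justify working one monomial $a_S$ at a time.

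Second, once independence and pairwise intersection are in hand, no induction on excess is needed. Choose $\mc{P}(S)=\sigma_1\setminus\{i\}$ for any $i\in\mc{F}(\sigma_1)$ and apply Lemma~\ref{tohands} to each atom in turn; the result $\tilde S$ satisfies~(\ref{hand}) with the same $\mc{P}(S)$. Now if some $\mc{F}(\tilde\sigma)$ has more than one element, set $\mu=\mc{P}(S)\cup\{f\}$ for one $f\in\mc{F}(\tilde\sigma)$. Then $\mu\subset\cup\tilde S$ so the join is unchanged, while deleting any $\tilde\tau\neq\mu$ drops a finger vertex and deleting $\mu$ gives back $\tilde S$; hence $d\,a_{\tilde S\cup\mu}=\pm a_{\tilde S}$ on the nose. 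So positive-excess terms are \emph{boundaries}, not things to be inductively reduced. Your proposed enlarging-by-$\rho$ move is exactly this $\mu$, but you only need to do it once, after normalization, and the answer is zero---there is no excess-lowering recursion to control.
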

Notice that the requirements the theorem puts on $S$ imply that $\mc{P}(S) = \cap S$ and $|\cap S| = k-1$.

\begin{proof}

By the universal property of a direct limit we may represent every element of the kernel of $d$, and hence the homology, by a representative from a finite stage $A_{q+k,k}$.  In particular, we may take a representative in some $A_{q+k,k}$ for which $q\leq k-1$.  In Theorem 8.8 of \cite{Yuz-Small}, Yuzvinsky proves that the homology of all $A_{\ell,k}$ is generated by independent sets of atoms.  We note that, if $\mc{F}(\gs)$ is non-empty for all $\gs \in S$, the set $S$ is independent.  Thus the sets $S$ described in the theorem are independent sets, and it suffices for us to show that, given an independent set of atoms $S$ in $\mc{A}_{q+k,k}$, the corresponding algebra element is homologous to a $a_{\tilde S}$ for some $\tilde S$ of the type described in the theorem.

Take a representative, $a_S$, for a homology class that is at a finite stage $A_{q+k,k}$ where $q \leq k-1$, and $S$ is an independent set of atoms. The hypergraph associated to $S$ can have at most one component because every two edges intersect when $q \leq k-1$.  

Since $S$ is an independent set of atoms $\vee_{\gt \in S} \gt \neq \vee_{\gt \in S\bs \gs} \gt$ for all $\gs \in S$.  In general, removing $\gs$ from the independent set $S$ may create more connected components in the hypergraph, reduce the number of vertices in $\cup S$, or both.  Because $q\leq k-1$ the hypergraph associated to any set of atoms has only one connected component.  This implies that for $\gs \in S$ there is some element $i \in [q+k]$ with $i \in \gs$ but $i \not\in \gt$ for all $\gt \neq \gs$.  Thus, $\mc{F}(\gs)$ is non-empty for all $\gs \in S$.

We now choose a set $\mc{P}(S)$.  Let $S = \{ \gs_1 , \ldots , \gs_r\}$ and let $i$ be a fixed element of $\mc{F}(\gs_1)$.  The set $\gs_1 \bs i$ satisfies the conditions for $\mc{P}(S)$ and so we set $\mc{P}(S)= \gs_1 \bs i$.  

We now use the homologies described in Lemma \ref{tohands} to show that $a_S$ is homologous to $\pm a_{\tilde S}$ for some $\tilde S$ with the properties described in the statement of the theorem.  First apply Lemma \ref{tohands} to the atom $\gs_1 \in S$ and let $\tilde S_1 = (S\bs \gs_1) \cup \tilde \gs_1= \{ \tilde \gs_1 , \gs_2, \ldots , \gs_r\}$.  By the lemma $a_S$ is homologous to $\pm a_{\tilde S_1}$.  We note that after applying the lemma $\mc{F}(\gs_1) = \mc{F}(\tilde \gs_1)$.  Though we cannot say the same for $\gs_j$ where $j \neq 1$, condition \ref{finger} for $\mc{P}(S)$ ensures that $\mc{F}(\gs_j)$ remains non-empty.

Now we construct a set $\tilde S$ by keeping $\mc{P}(S)$ fixed and inducting on the subscripts of the $\gs$ creating a $\tilde\gs_j$ for each $\gs_j \in S$.   So $\tilde S_j = \{ \tilde \gs_1 , \ldots , \tilde\gs_j , \gs_{j+1} , \ldots , \gs_r \}$ and we can proceed to carry out this construction for $\gs_{j+1}$ as an element of $\tilde S_j$.

After the induction is complete we let $\tilde S = \tilde S_r$.  At each stage of the induction there is a homology between $a_{\tilde S_{j-1}}$ and $\pm a_{\tilde S_j}$, whence $a_S$ is homologous to $\pm a_{\tilde S}$.  Furthermore, when Lemma \ref{tohands} is applied to $\gs_j \in \tilde S_{j-1}$ we have $\mc{F}(\gs_j) = \mc{F}(\tilde \gs_j)$ and condition \ref{finger} ensures that $\mc{F}(\gs_i)$ remains non-empty for all $i \neq j$.  Thus for $\mc{F}(\tilde \gs_j)$ is non-empty for every $\tilde\gs_j$ when considered as a set of $\tilde S$.  The definition of $\tilde \gs_j$ implies that $\gs_j \subset ( \mc{F}(\gs_j) \cup \mc{P}(S))$ so the hypergraph of $\tilde S$ satisfies Equation (\ref{hand}). 
 
We note that a different choice of $\mc{P}(S)$ may yield a different set $\tilde S$ but every choice yields a $\tilde S$ with the desired properties.  In particular, all possible choices lead to an $a_{\tilde S}$ that satisfies the conditions of the theorem and is homologous to the original $a_S$. 
 
We may now assume that the homology representative $a_S$ satisfies Equation (\ref{hand}) and $\mc{F}(\tilde \gs)$ is non-empty for all $\tilde \gs \in \tilde S$.  To finish the proof we show that, in order for the homology class of $a_S$ to be non-trivial, $|\mc{F}(\gs)| =1$ for all $\gs \in S$.  Assume that there is some $\gs \in S$ with $|\mc{F}(\gs)|>1$.  Then we can consider $S \cup \gm$ where $\gm$ contains the $k-1$ vertices of a set $\mc{P}(S)$ and one of $\mc{F}(\gs)$.  Now $da_{S \cup \gm} = a_S$.  Therefore if a $|\mc{F}(\gs)| >1$ the element $a_S$ represents the zero homology class.  
\end{proof}

\begin{remark}
It would be interesting to know what topological maps between complements of arrangements induce these maps on relative atomic complexes, if such topological maps exist.
\end{remark}


\section{A bi-complex}\label{Bi}

Let $\mathfrak{A}_*$ denote the graded vector space $\bigoplus\limits_{q\geq0} \mathfrak{A}_q$.  Note that $\mathfrak{A}_0$ is generated by $b_\emptyset$.  An index set $\gL$ for elements $b_\gL \in \mathfrak{A}_*$ is uniquely determined by the pair $(\cap\gL, \hat\gL)$ where $\hat \gL = \{ \gl \setminus ( \cap\gL) \, | \, \gl \in \gL\}$.  Every element $\hat \gl \in \hat\gL$ has the same cardinality, $|\hat \gl| = |\gl| - | \cap \gL|$. We identify the set of all possible $\cap\gL$ as the power set of $\Z_{>0}$ to define a simplicial differential on $\mathfrak{A}_*$.

\begin{definition}
Let $\gd \co \mathfrak{A}_* \to \mathfrak{A}_*$ be defined by
\begin{align}
\gd \co \mathfrak{A}_q  & \to \mathfrak{A}_{q-1} \\
b_{(\cap \gL, \hat\gL)} & \mto \sum_{\ell_i \in \cap_j \gl_j} (-1)^i b_{(\cap\gL \bs \ell_i, \hat \gL)}.
\end{align} 
\end{definition}

It follows immediately from the definition that $b_{(\cap \gL , \hat \gL)}$ maps to other elements of the form $b_{(\mc{I} , \hat \gL)}$ under $\gd$. 

\begin{remark}For the remainder of this paper we will occasionally abuse notation by writing $b_{\gL\bs \ga}$ to mean $b_{(\cap \gL \bs \ga,\hat \gL)}$ for an element $\ga\in \cap\gL$.  Note that this is a very different operation than the one in the differential $d$ where we take out an entire set of $\gL$. 
\end{remark}

\begin{proposition}
The map $\gd$ is a differential on the set $\mathfrak{A}_*$.
\end{proposition}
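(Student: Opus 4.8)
The plan is to show that $\gd$ is a differential by verifying that it is a well-defined, degree-lowering linear map satisfying $\gd^2 = 0$. Since $\gd$ is prescribed on the generators $b_{(\cap\gL,\hat\gL)}$ and extended linearly, everything reduces to a computation on a single generator. The guiding observation is that $\gd$ modifies only the coordinate $\cap\gL$, fixing the label $\hat\gL$, and that on $\cap\gL$ it is exactly the simplicial boundary operator of the ordered set $\cap\gL$; granting this, $\gd^2=0$ is the classical fact that a simplicial boundary squares to zero.

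First I would check well-definedness. Since $\ell_i \in \cap\gL$ lies in every $\gl \in \gL$, deleting $\ell_i$ from the intersection coordinate amounts to replacing each $\gl$ by $\gl \bs \ell_i$. Each such $\gl$ has the constant cardinality $q$ (being the complement of a size-$k$ atom in $A_{q+k,k}$), so each shrinks to size $q-1$ and the image lands in $\mathfrak{A}_{q-1}$. Moreover $\bigcap_{\gl}(\gl\bs\ell_i) = (\cap\gL)\bs\ell_i$ and $(\gl\bs\ell_i)\bs((\cap\gL)\bs\ell_i) = \gl\bs\cap\gL = \hat\gl$, so the new index is genuinely $(\cap\gL\bs\ell_i,\hat\gL)$ with the label $\hat\gL$ unchanged. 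This is the content of the remark preceding the proposition, and it is precisely what guarantees that the second application of $\gd$ is controlled by the same ordered set $\cap\gL$ with one element removed.

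Then I would run the boundary computation. Order $\cap\gL = \{\ell_1 < \cdots < \ell_n\}$, so that $\gd b_{(\cap\gL,\hat\gL)} = \sum_{i=1}^n (-1)^i b_{(\cap\gL\bs\ell_i,\hat\gL)}$. Applying $\gd$ once more and noting that deleting $\ell_i$ shifts the position of each later element $\ell_j$ (with $j>i$) down by one while leaving earlier positions fixed, each doubly-deleted term $b_{(\cap\gL\bs\{\ell_a,\ell_b\},\hat\gL)}$ with $a<b$ occurs exactly twice, once with sign $(-1)^{a+b}$ and once with sign $(-1)^{a+b-1}$. These cancel, so $\gd^2 b_{(\cap\gL,\hat\gL)} = 0$, and linearity gives $\gd^2 = 0$ on all of $\mathfrak{A}_*$.

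I do not anticipate a genuine obstacle: the only real work is the bookkeeping of confirming that $\gd$ leaves $\hat\gL$ untouched, which collapses the problem to a boundary on the single set $\cap\gL$, together with tracking the sign shift induced by relabeling after the first deletion. Once the reduction to a fixed $\hat\gL$ and a simplicial boundary on $\cap\gL$ is in place, the pairwise cancellation is routine.
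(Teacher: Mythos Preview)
Your proposal is correct and follows exactly the approach the paper takes: it observes that $\gd$ acts only on the coordinate $\cap\gL$ as the standard simplicial boundary, and then the vanishing of $\gd^2$ is the usual sign cancellation. The paper's proof is in fact just the one-line remark that this is a standard check of signs because $\gd$ is defined like the simplicial differential; your write-up simply spells out the details.
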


The proof is a standard check of signs.  We note that this differential is defined in the same manner as standard simplicial differential.

Let $ \bar{\mathfrak{A}}_*$ be the sub-algebra of $\mathfrak{A}_*$ generated by all elements $b_\gL$ where $|\gL|=1$.  So every element of $\bar{\mathfrak{A}}_*$ is a linear combination of elements of the form $b_\gL=b_{\{\gl\}}$ which we denote by $b_\gl$.

Let $\gD_+$ be the standard infinite-dimensional simplex with the empty set as a $-1$ simplex.  Since the realization of $\gD_+$ is contractible it's homology is zero and the augmentation by a $-1$ simplex means it is zero in degree zero and degree -1 as well.
Given a simplex, $\gh \in \gD_+$ we identify it as a subset of $\Z_{>0}$ and choose a generator in $C_*(\gD_+)$ associated to $\gh$ and denote it by $\bar\gh$. 

\begin{lemma} \label{barA}
There is an isomorphism of DGAs between $(\bar {\mathfrak{A}}_*, \gd)$ and $C_*(\gD_+)$.
\end{lemma}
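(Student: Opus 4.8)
The plan is to write down an explicit map on generators and verify that it preserves grading, differential, and product. Both algebras are naturally indexed by finite subsets of $\Z_{>0}$: the generators $b_\gl$ of $\bar{\mathfrak{A}}_*$ (here $\gl$ is a finite subset of $\Z_{>0}$, since in the complement representation the direct-limit maps fix $\gl$ and every finite $\gl$ occurs once $k$ is large enough), and the generators $\bar\gh$ of $C_*(\gD_+)$ (here $\gh$ is a finite subset of the vertex set $\Z_{>0}$, including $\gh=\emptyset$ for the $(-1)$-simplex). I would define $\phi\co\bar{\mathfrak{A}}_*\to C_*(\gD_+)$ on generators by $\phi(b_\gl)=\bar\gl$ and extend $\Q$-linearly; since the text records that $\{b_\gl\}$ is a basis of $\bar{\mathfrak{A}}_*$ and the simplices $\{\bar\gl\}$ are a basis of $C_*(\gD_+)$, this is a bijection of bases and hence a linear isomorphism. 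For the grading, recall $b_\gl\in\mathfrak{A}_{|\gl|}$, so I place the $(|\gl|-1)$-simplex $\bar\gl$ in chain degree $|\gl|$; that is, I grade $C_*(\gD_+)$ by number of vertices, equivalently shifting the usual simplicial degree up by one, so that $\bar\emptyset$ lies in degree $0$ alongside the unit $b_\emptyset$ of $\mathfrak{A}_0$. With this convention $\phi$ is degree-preserving.

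Next I would check that $\phi$ is a chain map, $\phi\gd=\partial\phi$. Writing $\gl=\{\ell_1<\cdots<\ell_q\}$, for a generator $b_\gl$ we have $\cap\gL=\gl$ and $\hat\gL=\{\emptyset\}$, which $\gd$ preserves, so $\gd$ maps $\bar{\mathfrak{A}}_*$ into itself with $\gd(b_\gl)=\sum_i(-1)^i b_{\gl\bs\ell_i}$; applying $\phi$ gives exactly the simplicial boundary $\partial(\bar\gl)=\sum_i(-1)^i\overline{\gl\bs\ell_i}$ term by term. Reconciling the signs is just the standard check already invoked for $\gd$ being a differential.

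The substantive point is the multiplicative structure. I would first observe that the atomic product (\ref{product}) is not the operative one on $\bar{\mathfrak{A}}_*$: in the limit the atoms obey $\gs_1\cap\gs_2=(\gl_1\cup\gl_2)^c$, which is cofinite and so has at least two elements, whence codimensions never add and (\ref{product}) vanishes identically. The operative product is the one induced by the arrangement monoid of Section \ref{arrts} (formalized in Section \ref{gproduct}), which on complements sends $b_{\gl_1}\cdot b_{\gl_2}$ to $\pm b_{\gl_1\sqcup\gl_2}$, the disjoint concatenation carrying the Koszul sign $\ge(S,T)$ obtained by sorting $\gl_1\sqcup\gl_2$ into increasing order. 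This is precisely the join product on $C_*(\gD_+)$, for which $\bar\gl_1\cdot\bar\gl_2=\pm\overline{\gl_1\sqcup\gl_2}$ with the same shuffle sign, with common unit $b_\emptyset\leftrightarrow\bar\emptyset$. Conceptually, each side is the exterior algebra on the degree-one generators $b_{\{i\}}$, $i\in\Z_{>0}$, equipped with the Koszul/augmentation differential $\gd(b_{\{i\}})=b_\emptyset$; this is the cleanest way to see simultaneously that $\gd$ is a graded derivation and that $\phi$ is multiplicative. Granting this, $\phi$ is a degree-preserving algebra map commuting with the differentials, hence an isomorphism of DGAs.

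The main obstacle I anticipate is bookkeeping the identification of indices: the monoid product genuinely relabels the second factor (shifting its indices) to force disjointness, whereas in $\gD_+$ one simply forms the union $\gl_1\cup\gl_2$ of two already-disjoint faces. One must verify that passing to direct-limit equivalence classes makes these agree, that no vertex collision is introduced so that $\gl_1\sqcup\gl_2$ is a well-defined face of $\gD_+$, and that the sign conventions for $\gd$, $\partial$, and $\ge(S,T)$ are pinned down so that the Leibniz rule holds on the nose. Once those are settled, the remainder is a direct comparison of definitions.
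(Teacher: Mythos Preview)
Your map $\phi(b_\gl)=\bar\gl$ and the verification that it is a bijection commuting with the differentials is exactly the paper's argument: the paper defines the same map $\ga$, declares the bijection ``obvious,'' and says the chain-map property is ``obvious because both differentials are defined in the standard simplicial way.'' For the chain-complex isomorphism, you and the paper agree.

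Where you diverge is in taking the word ``DGA'' seriously. The paper's proof says nothing about products; the lemma is invoked later (in Theorem~\ref{decomp} and Corollary~\ref{zero}) purely as a chain-complex isomorphism to conclude $H_*=0$. The multiplicative structure you supply is extra work the paper never does. Two cautions about that extra work: first, the product you invoke is the one from Section~\ref{gproduct}, which is introduced \emph{after} this lemma, so at this point in the paper no nontrivial product on $\mathfrak{A}_*$ has been defined (and you correctly note the inherited atomic product vanishes); second, the arrangement monoid of Section~\ref{arrts} and the product of Definition~\ref{prod1} are not the same construction---the former shifts indices on the second factor and gives the direct-system maps, while the latter is a genuinely new pairing on $\mathfrak{A}_*$---so the parenthetical ``formalized in Section~\ref{gproduct}'' is a conflation. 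Your endpoint, that on $\bar{\mathfrak{A}}_*$ the Section~\ref{gproduct} product reduces to the simplicial join with shuffle sign, is correct once that product is available; it is simply not what the paper is proving here.
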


\begin{proof}
The isomorphism $\ga \co\bar {\mathfrak{A}}_* \to C(\gD_+)$ sends $b_\gl$ to $\bar\gl$ so $b_\emptyset$ maps to the algebra element associated to the $-1$ simplex.  The fact that the map is a bijection is obvious.  The fact that it is a map of chain complexes is also obvious because both differentials are defined in the standard simplicial way.
\end{proof}

 Let $\mathscr{P}_i(\Z_{>0})$ denote the subsets of $\Z_{>0}$ with cardinality $i$.  Further, let $\mathscr{E}_i$ denote the subsets of $\mc{L} \subseteq \mathscr{P}_i(\Z_{>0})$ for which $\cap\mc{L}$ is empty.  Now, for $\mc{L} \in \mathscr{E}_i$, let $\mathfrak{A}_*(\mc{L})$ be the subalgebra of $\mathfrak{A}_*$ generated by elements of the form $b_{(\mc{I} , \mc{L})}$ where $\mc{I} \subset \Z_{>0}$ and $\mc{I} \cap (\cup \mc{L})$ is empty. Note that when $i=0$ then the only possible $\mc{L}$ is the empty set. And so in this notation we have that $\mathfrak{A}_*(\emptyset )=\bar{\mathfrak{A}}_*$.

\begin{lemma} \label{bars}
For a fixed $\mc{L}\subseteq \mathscr{E}_i$ the unique order preserving bijection $$\gF_{\mc{L}} \co\Z_{>0} \bs (\cup\mc{L}) \to \Z_{>0}$$ induces an isomorphism $\bar{\gF}_{\mc{L}} :\mathfrak{A}_{*}(\mc{L}) \to \bar{\mathfrak{A}}_{* -i}$ given by 
$$\bar \gF_{\mc{L}} \left(b_{(\mc{I}, \mc{L})}\right) =  b_{\gF_{\mc{L}}(\mc{I})}.$$ 
\end{lemma}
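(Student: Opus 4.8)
The plan is to exhibit $\bar\gF_{\mc{L}}$ as a degree-shifting linear bijection on generators and then to check that it intertwines the two copies of the differential $\gd$; the order-preserving nature of $\gF_{\mc{L}}$ will do all the work in the second step. First I would record the bookkeeping. Every generator of $\mathfrak{A}_*(\mc{L})$ has the form $b_{(\mc{I},\mc{L})}$ with $\mc{I}\subset\Z_{>0}$ and $\mc{I}\cap(\cup\mc{L})$ empty, so $\mc{I}$ ranges over the finite subsets of $\Z_{>0}\bs(\cup\mc{L})$. Since $\gF_{\mc{L}}\co\Z_{>0}\bs(\cup\mc{L})\to\Z_{>0}$ is a bijection, the assignment $\mc{I}\mapsto\gF_{\mc{L}}(\mc{I})$ is a bijection from these subsets onto all finite subsets of $\Z_{>0}$, which are exactly the indices of the one-set generators $b_\gl$ of $\bar{\mathfrak{A}}_*$. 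Hence $\bar\gF_{\mc{L}}$ carries a basis bijectively to a basis and is a linear isomorphism.

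Next I would pin down the grading. For any $b_\gL\in\mathfrak{A}_q$ each $\gl\in\gL$ has $|\gl|=q$, so writing $\gl=\mc{I}\cup\hat\gl$ with $\hat\gl\in\mc{L}\subseteq\mathscr{P}_i(\Z_{>0})$ disjoint from $\mc{I}=\cap\gL$ gives $|\mc{I}|=q-i$. Thus $b_{(\mc{I},\mc{L})}$ lies in $\mathfrak{A}_q$ precisely when $|\mc{I}|=q-i$, while its image $b_{\gF_{\mc{L}}(\mc{I})}$, being a one-set generator, lies in $\bar{\mathfrak{A}}_{|\mc{I}|}=\bar{\mathfrak{A}}_{q-i}$. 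This is exactly the claimed shift $\mathfrak{A}_*(\mc{L})\to\bar{\mathfrak{A}}_{*-i}$, and when $i=0$ one has $\cup\mc{L}$ empty, $\gF_{\mc{L}}=\mathrm{id}$, and the trivial identification $\mathfrak{A}_*(\emptyset)=\bar{\mathfrak{A}}_*$.

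The heart of the argument is that $\bar\gF_{\mc{L}}$ commutes with $\gd$. Since $\gd$ fixes $\hat\gL$ and only deletes elements of $\cap\gL=\mc{I}$ one at a time, $\mathfrak{A}_*(\mc{L})$ is a subcomplex, and on it $\gd$ reads $\gd(b_{(\mc{I},\mc{L})})=\sum_{m\in\mc{I}}(-1)^{\mathrm{pos}(m)}b_{(\mc{I}\bs m,\mc{L})}$, where $\mathrm{pos}(m)$ is the position of $m$ in the increasing order of $\mc{I}$. Because $\gF_{\mc{L}}$ is order-preserving it sends the $j$-th smallest element of $\mc{I}$ to the $j$-th smallest element of $\gF_{\mc{L}}(\mc{I})$, so $\mathrm{pos}(m)=\mathrm{pos}(\gF_{\mc{L}}(m))$, while injectivity gives $\gF_{\mc{L}}(\mc{I}\bs m)=\gF_{\mc{L}}(\mc{I})\bs\gF_{\mc{L}}(m)$. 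Applying $\gd$ in $\bar{\mathfrak{A}}_*$ to $b_{\gF_{\mc{L}}(\mc{I})}$ therefore produces, term by term and with matching signs, exactly $\bar\gF_{\mc{L}}$ applied to $\gd(b_{(\mc{I},\mc{L})})$, giving $\gd\circ\bar\gF_{\mc{L}}=\bar\gF_{\mc{L}}\circ\gd$.

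I expect the only delicate point to be this sign matching, which is precisely where the order-preservation hypothesis on $\gF_{\mc{L}}$ is indispensable; everything else is bookkeeping. To upgrade the chain isomorphism to the asserted isomorphism of the generated subalgebras, I would finally check that $\bar\gF_{\mc{L}}$ respects products, which again follows because $\gF_{\mc{L}}$ preserves order and disjointness, so the reordering sign $\ge$ attached to a product in $\mathfrak{A}_*(\mc{L})$ agrees with the one computed after relabeling by $\gF_{\mc{L}}$ in $\bar{\mathfrak{A}}_*$.
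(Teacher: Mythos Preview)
Your proposal is correct and follows the same approach as the paper: both arguments observe that the generators on each side are indexed by finite subsets of a copy of $\Z_{>0}$, transport them via the bijection $\gF_{\mc{L}}$, and then appeal to the fact that the differential $\gd$ is defined identically on each side. Your version is in fact more careful than the paper's own proof, which dispatches the chain-map verification with the single sentence ``The differentials are the same''; you make explicit the degree shift and the sign-matching via order-preservation, which is exactly the content hidden behind that sentence.
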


\begin{proof}
The generators of $\bar{ \mathfrak{A}}_*$ are indexed by subsets of $\Z_{>0}$.  For a fixed $\mc{L}$ we consider the set of all $\gL$ for which $\hat \gL = \mc{L}$.  Generators of $\mathfrak{A}_*(\mc{L})$ of the form $b_{(\cap \gL , \mc{L})}$ are indexed by $\cap \gL$ and hence the subsets $\gF_{\mc{L}}(\cap\gL)$ of $\Z_{>0}$.  Since $\gF_{\gL}$ is a bijection, the generators of the two algebras are in bijection with each other.  The differentials are the same, so we have an isomorphism of algebras.
\end{proof}

\begin{theorem}\label{decomp}
The complex $(\mathfrak{A}_*, \gd)$ is isomorphic to a direct sum of simplicial chain complexes
$$ (\mathfrak{A}_*, \gd)  \cong \left( \bigoplus_{i\geq 0} \bigoplus_{\mc{L} \subset \mathscr{E}_i} C_*(\gD_+) \right).$$
\end{theorem}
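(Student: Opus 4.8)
The plan is to recover the direct-sum decomposition by sorting the generators $b_\gL$ according to the ``shape'' $\hat\gL$ of their index, and then identifying each resulting piece through the two lemmas just proved. The engine is the bijection $\gL \leftrightarrow (\cap\gL, \hat\gL)$ from the start of Section \ref{Bi}, together with the fact that $\gd$ never alters the $\hat\gL$ coordinate.

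First I would show that, as a graded vector space, $\mathfrak{A}_*$ is the direct sum of the subspaces spanned by the generators with a fixed value $\hat\gL = \mc{L}$, and that these values range exactly over the indexing set in the theorem. Set $\mc{I} = \cap\gL$ and $\mc{L} = \hat\gL$. Two verifications are needed. First, $\mc{L} \in \mathscr{E}_i$ where $i = |\hat\gl|$: the members of $\hat\gL$ share the common cardinality $i$ (already noted in the text), and $\bigcap_{\gl \in \gL}(\gl \setminus \cap\gL) = (\cap\gL) \setminus (\cap\gL) = \emptyset$, so $\cap\mc{L}$ is empty as $\mathscr{E}_i$ requires. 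Second, because $\cap\gL \subseteq \gl$ for every $\gl \in \gL$ we have $\cup\mc{L} = (\cup\gL) \setminus (\cap\gL)$, which is disjoint from $\mc{I} = \cap\gL$; this is precisely the condition $\mc{I} \cap (\cup\mc{L}) = \emptyset$ that makes $b_{(\mc{I}, \mc{L})}$ a generator of $\mathfrak{A}_*(\mc{L})$. Hence every basis element $b_\gL$ lies in $\mathfrak{A}_*(\hat\gL)$ and in no other $\mathfrak{A}_*(\mc{L})$, and the assignment is onto the pairs $(i, \mc{L})$ with $i \geq 0$ and $\mc{L} \in \mathscr{E}_i$.

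Next I would promote this to a decomposition of chain complexes. This is immediate from the observation recorded just after the definition of $\gd$: the image $\gd\, b_{(\cap\gL, \hat\gL)}$ is a signed sum of terms $b_{(\mc{I}', \hat\gL)}$, all of which keep $\hat\gL$ fixed, so $\gd$ preserves each $\mathfrak{A}_*(\mc{L})$. Therefore $(\mathfrak{A}_*, \gd)$ is the direct sum over $i \geq 0$ and $\mc{L} \in \mathscr{E}_i$ of the subcomplexes $(\mathfrak{A}_*(\mc{L}), \gd)$. Finally, Lemma \ref{bars} gives an isomorphism $(\mathfrak{A}_*(\mc{L}), \gd) \cong (\bar{\mathfrak{A}}_{*-i}, \gd)$, and Lemma \ref{barA} gives $(\bar{\mathfrak{A}}_*, \gd) \cong C_*(\gD_+)$; composing these identifies each summand with a copy of $C_*(\gD_+)$ and assembles into the stated isomorphism.

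I do not expect a deep obstacle, since Lemmas \ref{barA} and \ref{bars} already carry the substantive content. The only delicate point is the bookkeeping in the first step: confirming that $b_\gL \mapsto \hat\gL$ is a well-defined, exhaustive, and mutually exclusive sorting of the basis (so that no generator is omitted or double-counted and every $\mc{L} \in \mathscr{E}_i$ occurs), and keeping the gradings aligned. Concretely one should track that the external grading by $q = |\cap\gL| + i$ on the left corresponds, after the shift by $i$ built into Lemma \ref{bars}, to the simplicial degree on the copy of $C_*(\gD_+)$, so that the isomorphism is degree-preserving.
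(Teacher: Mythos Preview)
Your proposal is correct and follows essentially the same route as the paper: decompose $\mathfrak{A}_*$ according to the fixed value of $\hat\gL$, observe that $\gd$ respects this decomposition, and then invoke Lemmas \ref{bars} and \ref{barA} to identify each piece with $C_*(\gD_+)$. The paper packages this by writing down explicit global maps $\gf$ and $\gy$ built from $\ga\circ\bar\gF_{\hat\gL}$ and its inverse, whereas you spell out more carefully why every generator lands in exactly one $\mathfrak{A}_*(\mc{L})$; these are cosmetic differences, not mathematical ones.
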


\begin{proof}

We denote by $\bar\gb(i,\mc{L})$ the element $\bar\gb \in C_*(\gD_+)$, where $C_*(\gD_+)$ is the summand corresponding to a fixed $i$ in the first summation and a fixed $\mc{L}$ in the second summation of $\left( \bigoplus_{i\geq 0} \bigoplus_{\mc{L} \subset \mathscr{P}_i(\Z_{>0})} C_*(\gD_+) \right)$.

Consider some $\gL = (\cap\gL, \hat\gL)$.  If we let $\gd b_\gL = \sum c_j b_{\gL_j}$ then $\hat \gL = \hat \gL_j$ for all $j$. We prove the theorem by exhibiting an isomorphism $\gf$ and its inverse $\gy$. The map 
$$\gf \co (\mathfrak{A}_*, \gd)  \lto \left( \bigoplus_{i\geq 0} \bigoplus_{\mc{L} \subset \mathscr{E}_i} C_*(\gD_+) \right)$$ 
 is defined on a generator $b_{(\cap\gL, \hat\gL)}  \in (\mathfrak{A}_*,\gd)$ by 
\begin{equation}
\gf\left(b_{(\cap\gL, \hat\gL)}\right) = \bigg[\left(  \ga\circ\bar\gF_{\hat\gL} \right) \left( b_{(\cap\gL, \hat\gL)} \right)\bigg]\left( |\hat\gl| , \hat\gL \right) .
\end{equation} where the map $\ga$ is defined in the proof of Lemma \ref{barA} and the map $\bar\gF_{\hat\gL}$ is defined in Lemma \ref{bars}.  We note that ${b}_{\gF_{\hat\gL}(\cap\gL)}$ is an element of $\bar{\mathfrak{A}}_*$ and the element 
$$\left(  \ga\circ\bar\gF_{\hat\gL} \right) \left( b_{(\cap\gL, \hat\gL)} \right) = \ga\left( b_{\gF_{\hat\gL}(\cap\gL)} \right)= \overline{\gF_{\hat\gL}(\cap\gL)}, $$
is in $C_*(\gD_+)$.  The second part of the notation $( |\hat\gl| , \hat\gL )$ gives the indices for the appropriate $C_*(\gD_+)$ summand in the codomain.  We extend this definition by linearity to all elements of $\mathfrak{A}_*$
 
Now we define $\gy$, the inverse to $\gf$ on a generator $\bar\gb(i,\mc{L})$
by the equation
\begin{equation}
\gy( \bar\gb(i,\mc{L})) = \bigg( \bar\gF_{\mc{L}}^{-1} \circ \ga^{-1} \bigg)  \left( \bar\gb \right) = \bar\gF_{\mc{L}}^{-1}  \left(b_{\gb} \right),
\end{equation} 
where $b_\gb$ is in $\bar{\mathfrak{A}}_*$. Again we extend this definition by linearity to all elements in the domain.

For $b_{(\cap\gL,\hat{\gL})}$, the differential $\gd$ only changes the intersection $\cap \gL$ while every term has $\hat{\gL}$ remain the same . So, for a fixed $\mc{L}$ the differential $\gd$ preserves the complement of the intersection $\mc{L}$ and hence it is the same differential on the subalgebras $\gd: \mathfrak{A}_q(\mc{L})\to\mathfrak{A}_{q-1}(\mc{L})$. The construction of the subalgebras $\mathfrak{A}_*(\mc{L})$ together with this fact about the differential give that the complex decomposes into the direct sum:

$$(\mathfrak{A}_*,\gd)\cong \bigoplus_{i\geq 0}\bigoplus_{\mc{L}\in \mathscr{E}_i} (\mathfrak{A}_*(\mc{L}),\gd).$$

Now both $\gf$ and $\gy$ are DGA maps since they map summands to summands. Since Lemma \ref{barA} and Lemma \ref{bars} show that for each summand $\gf$ and $\gy$ are isomorphisms and inverses of each other, we have completed the proof.
\end{proof}

\begin{corollary} \label{zero}
The homology of $\mathfrak{A}_*$ with respect to $\gd$ is identically zero.
\end{corollary}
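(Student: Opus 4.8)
The plan is to read this off directly from Theorem \ref{decomp} together with the contractibility of $\gD_+$, so the corollary is essentially immediate. First I would invoke the isomorphism of complexes
$$(\mathfrak{A}_*, \gd) \cong \bigoplus_{i\geq 0}\bigoplus_{\mc{L}\in\mathscr{E}_i} C_*(\gD_+)$$
furnished by Theorem \ref{decomp}. Since passing to homology commutes with (possibly infinite) direct sums of chain complexes, it suffices to compute the homology of a single summand $C_*(\gD_+)$; the homology of the whole complex is then the direct sum of these.

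Next I would argue that each summand is acyclic. The complex $C_*(\gD_+)$ is the augmented simplicial chain complex of the standard infinite-dimensional simplex $\gD_+$. Because the geometric realization of $\gD_+$ is contractible, its reduced homology vanishes in all non-negative degrees, and the presence of the $-1$ simplex (the augmentation via the empty simplex, as set up in the discussion preceding Lemma \ref{barA}) forces vanishing in degree $0$ and degree $-1$ as well. Hence $H_*(C_*(\gD_+)) = 0$ in every degree.

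Combining these two observations, the homology of $(\mathfrak{A}_*, \gd)$ is a direct sum of copies of the zero module, one for each pair $(i,\mc{L})$ with $i\geq 0$ and $\mc{L}\in\mathscr{E}_i$, and is therefore identically zero.

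I do not expect any genuine obstacle here, since the substantive content is entirely carried by Theorem \ref{decomp}. The only point requiring mild care is the bookkeeping at the bottom of the complex: one must use the augmented version of $C_*(\gD_+)$, with the empty set as a $(-1)$-simplex, so that $H_0$ also vanishes rather than leaving a single copy of $\Q$ coming from the connected realization. This is precisely why $\gD_+$ was defined with the $-1$ simplex in the first place.
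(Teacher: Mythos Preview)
Your proposal is correct and follows exactly the paper's own argument: the corollary is deduced in one line from Theorem \ref{decomp} together with the vanishing of $H_*(\gD_+)$. Your added remarks about homology commuting with direct sums and the role of the $-1$ simplex only make explicit what the paper leaves implicit.
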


This follows directly from the fact that the homology $H_*(\gD_+)$ is zero and Theorem \ref{decomp}. We now combine the simplicial structure with the structure inherited from the relative atomic complexes.
\begin{proposition}
$\{\mathfrak{A}_* , d , \gd \}$ is a double complex.
\end{proposition}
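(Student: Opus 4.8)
The plan is to verify the three defining properties of a double complex: that $d$ and $\gd$ are each differentials, i.e. $d^2=0$ and $\gd^2=0$, and that they are compatible, i.e. $d\gd = \gd d$ (equivalently $d\gd+\gd d = 0$ after the usual sign twist on one grading). The relation $d^2=0$ I would get essentially for free, since $d$ is by construction the direct limit of the relative atomic complex differentials on the $A_{k,\ell}$, each of which satisfies $d^2=0$ because $A_{k,\ell}$ is a DGA, and a direct limit of chain complexes is again a chain complex. The relation $\gd^2=0$ is precisely the content of the proposition preceding this one. So the only statement requiring genuine work is the compatibility of $d$ and $\gd$.

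For the compatibility, the guiding principle is that in the coordinates $\gL=(\cap\gL,\hat\gL)$ the two differentials act on disjoint pieces of the data: $d$ removes an entire set $\gl_j$ while fixing $\cap\gL$, so it alters only $\hat\gL$, whereas $\gd$ removes a single element $\ell_i$ from $\cap\gL$ while leaving $\hat\gL$ untouched. I would make this precise by writing each $\gl_k=\hat\gl_k\cup(\cap\gL)$ and noting that $\cap(\gL\bs\gl_j)=(\cap\gL)\cup\bigcap_{k\neq j}\hat\gl_k$. Since the $\hat\gl_k$ are disjoint from $\cap\gL$, the admissibility condition $\cap(\gL\bs\gl_j)=\cap\gL$ defining the terms of $d$ is equivalent to $\bigcap_{k\neq j}\hat\gl_k=\emptyset$, a condition on $\hat\gL$ alone. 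Consequently the index set of admissible $j$ for $d$ is untouched by $\gd$, and symmetrically the index set of $\gd$ (the elements of $\cap\gL$) is untouched by $d$, because $d$ preserves $\cap\gL$. This is the step I expect to be the main obstacle: one must check carefully that removing $\ell_i$ from $\cap\gL$ really does leave the reduced data $\hat\gL$ and the $d$-admissibility of each $\gl_j$ intact, and conversely, so that the double sums for $d\gd$ and $\gd d$ range over the same index set of pairs $(i,j)$.

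Once the index sets are matched, the argument reduces to bookkeeping of signs. The sign $(-1)^j$ in $d$ is governed by the position of $\hat\gl_j$ in the ordered set $\hat\gL$, while the sign $(-1)^i$ in $\gd$ is governed by the position of $\ell_i$ in the ordered set $\cap\gL$; since neither differential disturbs the ordering relevant to the other (removing a common element from every set preserves the relative order of the $\gl_k$), both $d\gd(b_\gL)$ and $\gd d(b_\gL)$ evaluate to the identical double sum $\sum_{i,j}(-1)^{i+j}\,b_{(\cap\gL\bs\ell_i,\ \hat\gL\bs\hat\gl_j)}$. This gives $d\gd=\gd d$; if the anticommuting normalization is desired, replacing $\gd$ by its product with $(-1)^{\mathrm{codeg}}$ on each homogeneous piece yields $d\gd+\gd d=0$. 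Together with $d^2=0$ and $\gd^2=0$, this establishes that $\{\mathfrak{A}_*,d,\gd\}$ is a double complex.
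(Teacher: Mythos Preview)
The paper does not actually give a proof of this proposition; it is stated and then the discussion moves immediately to the bigrading used to compute the total homology, so the authors are treating the double complex axioms as a routine check left to the reader. Your proposal is exactly that routine check, and it is correct: $d^2=0$ comes from the direct limit of the relative atomic complexes, $\gd^2=0$ is the preceding proposition, and your analysis of compatibility---that in the coordinates $(\cap\gL,\hat\gL)$ the differential $d$ alters only $\hat\gL$ (with admissibility condition $\bigcap_{k\neq j}\hat\gl_k=\emptyset$ depending only on $\hat\gL$) while $\gd$ alters only $\cap\gL$---is precisely what is needed to see that the two operations act on independent data and hence commute up to the standard sign bookkeeping. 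Your remark about the commuting versus anticommuting convention is also apt, since the paper never specifies which one is intended.
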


To calculate the total homology of the bi-complex we define a new grading on the collection $\mathfrak{A}_*$. 

\begin{definition}
 Let $\mathfrak{A}^{(m,t)}$ be generated by the set
\begin{align} 
\left\{ b_\gL \in \mathfrak{A}_* \st |\gL|=m \, , \; |\cap\gL|=t \right\}.
\end{align}
\end{definition}

\begin{proposition}
The total homology $H^{\text{Tot}}(\mathfrak{A}_*,d, \gd)$ is zero.
\end{proposition}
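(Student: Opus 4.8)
The plan is to read the total homology off the spectral sequence of the double complex, using Corollary \ref{zero} to annihilate one of its pages. First I would record how the two differentials act on the bigrading from the preceding definition. Writing $m = |\gL|$ and $t = |\cap\gL|$, the differential $d$ removes an entire set $\gl_j$ from $\gL$ while leaving $\cap\gL$ unchanged, so $d \colon \mathfrak{A}^{(m,t)} \to \mathfrak{A}^{(m-1,t)}$; whereas $\gd$ deletes a single element from $\cap\gL$ and preserves the number of sets, so $\gd \colon \mathfrak{A}^{(m,t)} \to \mathfrak{A}^{(m,t-1)}$. Since $m \geq 0$ and $t \geq 0$, the pair $(d,\gd)$ organizes $\mathfrak{A}_*$ as a first-quadrant double complex in the bidegree $(m,t)$, and for each total degree $n$ only the finitely many bidegrees with $m+t = n$ and $m,t \geq 0$ contribute to $\mathrm{Tot}_n = \bigoplus_{m+t = n} \mathfrak{A}^{(m,t)}$.

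Next I would form the total complex with total differential $D = d \pm \gd$, the sign being exactly the one dictated by the preceding proposition that $\{\mathfrak{A}_*, d, \gd\}$ is a double complex, and then pass to the spectral sequence obtained by computing $\gd$-homology first. On the $E_0$ page each column is a copy of the $\gd$-complex, so the $E_1$ page is the $\gd$-homology of $\mathfrak{A}_*$. By Corollary \ref{zero} this homology is identically zero, hence $E_1 = 0$, and therefore every later page vanishes as well.

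Finally I would invoke convergence. Because the double complex lies in the first quadrant, the induced filtration on each $\mathrm{Tot}_n$ is finite (it has at most $n+1$ nonzero steps), so the spectral sequence converges to $H^{\text{Tot}}(\mathfrak{A}_*, d, \gd)$; with $E_\infty = 0$ the total homology is forced to be zero. As an alternative route that avoids spectral sequences, Theorem \ref{decomp} identifies $(\mathfrak{A}_*, \gd)$ with a direct sum of acyclic complexes $C_*(\gD_+)$, hence furnishes an explicit chain contraction of $\gd$ column by column, and the standard perturbation argument upgrades this to a contraction of $D$.

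I expect the only genuine obstacle to be the convergence bookkeeping, namely confirming that both $m = |\gL|$ and $t = |\cap\gL|$ are bounded below and that each total degree meets only finitely many bidegrees, so that the first-quadrant convergence theorem applies even though the individual groups $\mathfrak{A}^{(m,t)}$ are infinite-dimensional. Once that boundedness is in hand, the vanishing of $E_1$ coming from Corollary \ref{zero} does all the remaining work, and no delicate computation involving $d$ itself is required.
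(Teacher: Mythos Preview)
Your argument is correct and follows the same route as the paper: filter the double complex so that the $E_1$ page is the $\gd$-homology, then invoke Corollary~\ref{zero} to kill $E_1$ and hence the total homology. You supply more detail than the paper does, in particular the first-quadrant check ensuring convergence and the explicit description of how $d$ and $\gd$ move in the $(m,t)$-bigrading; the paper simply asserts the vanishing of the $E_1$ page and stops. Your alternative contraction argument via Theorem~\ref{decomp} and perturbation is a legitimate bonus not present in the paper.
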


\begin{proof}
Notice that 
\begin{align} \label{r1delta}
\gd \co \mathfrak{A}^{(m,t)} \to \mathfrak{A}^{(m,t-1)}
\end{align} 
and by Corollary \ref{zero} the homology, with respect to $\gd$, is zero.  Thus the spectral sequence of the bicomplex filtered by $t$-degree (columns) has an $E_1$ page that is identically zero.  
\end{proof}

%
%
%
%
%
%
%
%
%
%
%

There is at least one other interesting bi-grading on $\mathfrak{A}_*$.  This bi-grading behaves nicely with respect to both differentials and helps to organize the homology of $\mathfrak{A}_*$ with respect to $d$ as described in Theorem \ref{homology}.
\begin{definition}
We define a bi-grading on the direct sum of algebras $\bigoplus_{2 \leq k \leq \ell} A_{\ell,k}$
\begin{align}
A_\ell(m,n) & = \left\{ \gb \in A_{\ell_0,k} \;\bigg|\; \ell_0 \leq \ell, \; \gb = \sum_\gL b_\gL \text{ and }  m=\ell-k+1 \, , \; n=|\gL|+|\cap\gL| \right\}.
\end{align}
This grading extends to the limiting algebra $\mathfrak{A}_*$ by letting $\ell$ go to infinity.
In the finite algebras we set $q=\ell-k$, as usual.
\end{definition}

Let $\mathfrak{A}(m,n)$ be the direct limit of $A_\ell(m,n)$ under $\times A_{1,1}$.
With these gradings the differentials behave as follows:
\begin{align}
d\co & A_\ell(m,n) \to A_\ell(m,n-1) \\
\gd \co & A_\ell(m,n) \to A_\ell(m-1,n-1) \\
A_{1,1}\co & A_\ell (m,n) \to A_{\ell+1}(m,n)
\end{align}
The multiplication by $A_{1,1}$ map sends independent sets to independent sets.  Moreover, it maps a set of the form described in Theorem \ref{homology} to a set of the same form.  The index $m$ of $\mathfrak{A}(m,n)$ can be interpreted as the maximum size of the set of atoms $S$, and $n$ can be understood as the size of $\mc{P}(S)$.  The homology is non-zero only when $m \geq n$.  The same is true for $A_{\ell,k}$ when $\ell \geq 2k-1$.


\section{A graded product}\label{gproduct}

A cursory check shows that a cup product structure on $\mathfrak{A}_q$ that is the limit of the cup product on $A_{k,\ell}$, is always equal to zero.  There is, however, a different product structure on $\bigoplus_{q \in \Z_{>0}}\mathfrak{A}_q$ that nearly commutes with the differential $\gd$. In this section we define this product on the collection of all the $\mathfrak{A}_q$ and study some of its properties.  In order to define a product on $\mathfrak{A}_*$ we will need the following combinatorial definition.

\begin{definition}

For $\gL=\{\gl_1,\dots ,\gl_r\}$ and $\gG=\{\gg_1,\dots, \gg_s\}$ with $\gl_i$ and $\gg_j$ subsets of $\Z_{>0}$ we say that $\gL$ and $\gG$ are \emph{compatible} if either one of sets $\gL$ or $\gG$ is empty or both $r=s$ and $$\left[ \cup\gL\right] \cap \left[ \cup\gG\right]=\emptyset.$$ If $\gL$ and $\gG$ are compatible we define $$\gL \biguplus \gG=\{\gl_1 \cup \gg_1,\dots,\gl_r\cup\gg_r\}.$$

\end{definition}

Next we will need a little notation to define the sign of the product. Recall that for any two elements $\gl,\gg \in [\ell]$, the sign $\ge (\gl,\gg)$ is the sign of the permutation that reorders $\gl \cup \gg$ so that all the elements of $\gl$ come before those in $\gg$ considered as the number of inversions not just the parity. Then the sign of two compatible multisets $\gL=\{\gl_1,\dots ,\gl_r\}$ and $\gG=\{\gg_1,\dots, \gg_r\}$ is $$\epsilon (\gL,\gG)=\ge(\cap \gL,\cap \gG).$$

Now we can define a product on the vector space $A$.

\begin{definition}\label{prod1} 
For simple elements $b_\gL\in \mathfrak{A}_p$ and $b_{\gG}\in \mathfrak{A}_q$ let 
\begin{equation*}
b_\gL b_\gG:=\begin{cases} (-1)^{\epsilon (\gL,\gG)}b_{\gL\biguplus \gG} \quad& \text{if } \gL \text{ and }\gG \text{ are compatible}\\
0 &\text{otherwise.}
\end{cases} \end{equation*} 
Then extend this product linearly to all of $\mathfrak{A}_*$.
\end{definition}

In Definition \ref{prod1}, since the size of each element of $\gL$ and $\gG$ is $p$ and $q$ respectively the product $b_\gL b_\gG\in \mathfrak{A}_{p+q}$ is graded (but not commutative nor graded commutative). However, it is associative.

Let $b_\gL\in \mathfrak{A}_p$, $b_{\gG}\in \mathfrak{A}_q$, and $b_\Theta\in \mathfrak{A}_t$. If any pair of the sets $\gL$, $\gG$, and $\Theta$ are not compatible then both products $b_\gL (b_\gG b_\Theta)$ and $(b_\gL b_\gG) b_\Theta$ will clearly be zero. Hence, we assume that any pair of the sets $\gL$, $\gG$, and $\Theta$ are compatible. Since any pair is compatible, any of these sets will be compatible with the union of the other two and both products $b_\gL (b_\gG b_\Theta)$, and $(b_\gL b_\gG) b_\Theta$, will result in $\pm b_{\gL \cup \gG \cup \Theta}$.  The sign is associative, so the product of Definition \ref{prod1} is associative.

One reason to investigate this product is that it is close to satisfying the graded Leibniz rule with respect to $\gd$. To show this we setup some notation and prove a few Lemmas. Suppose that $\gL$ and $\gG$ are compatible, $|\gl_i|=p$, $|\gg_i|=q$, $\cap \gL =\{\alpha_1,\dots ,\ga_x\}$, and $\cap \gG =\{\gb_1,\dots ,\gb_y\}$. Since $\gL$ and $\gG$ are compatible $\cap [\gL \biguplus \gG]=[\cap \gL]\cup[\cap\gG]$. 

\begin{lemma}\label{inversions} 
For $i\in \{1,\dots ,x\}$ let $k_i$ be the number of elements strictly between $\alpha_i$ and $\alpha_{i+1}$ in the intersection of the union $\cap [\gL \biguplus \gG]$. Then remembering the sign as the number of inversions for reordering the sets yields
$$\epsilon (\gL \bs \{ \alpha_{i+1}\} ,\gG)=\epsilon (\gL \bs \{\alpha_i\},\gG)-k_i.$$
\end{lemma}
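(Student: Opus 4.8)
The plan is to unwind the definition of $\epsilon$ as an honest inversion count — not merely its parity — and then compute the difference on the right-hand side by a direct cancellation.

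First I would record an explicit combinatorial formula for $\ge$. Since $\gL$ and $\gG$ are compatible, $\cup\gL$ and $\cup\gG$ are disjoint, so in particular $\cap\gL$ and $\cap\gG$ are disjoint subsets of $\Z_{>0}$ and $\cap[\gL\biguplus\gG]=[\cap\gL]\cup[\cap\gG]$ is their disjoint union, ordered by the natural order. Starting from $\cap\gL\cup\cap\gG$ in increasing order and sorting it into the target order (all of $\cap\gL$ first, then all of $\cap\gG$, each block increasing), a pair $\{\ga,\gb\}$ contributes an inversion exactly when $\ga\in\cap\gG$, $\gb\in\cap\gL$, and $\ga<\gb$. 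Counting these inversions by the larger $\cap\gL$-element gives
\[
\epsilon(\gL,\gG)=\ge(\cap\gL,\cap\gG)=\sum_{\ga\in\cap\gL}\big|\{\gb\in\cap\gG \, : \, \gb<\ga\}\big|.
\]

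Next I would apply this formula with $\cap\gL$ replaced by $\cap\gL\bs\{\ga_i\}$ and by $\cap\gL\bs\{\ga_{i+1}\}$ (recall that, by the abuse of notation introduced earlier, $\gL\bs\{\ga_i\}$ means we delete $\ga_i$ from $\cap\gL$). Both resulting sums run over $\{\ga_1,\dots,\ga_x\}$ with a single term deleted, while $\cap\gG$ is unchanged, so taking the difference telescopes: every summand cancels except the term indexed by $\ga_i$, which survives only in $\epsilon(\gL\bs\{\ga_{i+1}\},\gG)$, and the term indexed by $\ga_{i+1}$, which survives only in $\epsilon(\gL\bs\{\ga_i\},\gG)$. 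This yields
\[
\epsilon(\gL\bs\{\ga_{i+1}\},\gG)-\epsilon(\gL\bs\{\ga_i\},\gG)=\big|\{\gb<\ga_i\}\big|-\big|\{\gb<\ga_{i+1}\}\big|=-\big|\{\gb\in\cap\gG \, : \, \ga_i<\gb<\ga_{i+1}\}\big|.
\]
Finally, because $\ga_i$ and $\ga_{i+1}$ are consecutive in the ordered set $\cap\gL$, no element of $\cap\gL$ lies strictly between them, so the elements of $\cap[\gL\biguplus\gG]$ strictly between $\ga_i$ and $\ga_{i+1}$ are precisely the elements of $\cap\gG$ in the open interval $(\ga_i,\ga_{i+1})$; this count is exactly $k_i$, giving the claimed relation $\epsilon(\gL\bs\{\ga_{i+1}\},\gG)=\epsilon(\gL\bs\{\ga_i\},\gG)-k_i$.

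The computation is essentially bookkeeping, so the only genuinely delicate points are pinning down the convention for ``number of inversions'' so that the difference comes out with the correct sign $-k_i$ (rather than $+k_i$), and verifying that deleting an interior element from the telescoping sum leaves exactly the two boundary terms $\ga_i$ and $\ga_{i+1}$. I expect fixing the inversion convention to be the main (minor) obstacle, since everything downstream depends on orienting the reordering the same way as in the definition of $\ge$.
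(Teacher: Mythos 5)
Your proof is correct and follows essentially the same route as the paper's: both arguments reduce to counting inversions between $\cap\gG$ and $\cap\gL$ with one element deleted, and both use the fact that, since $\alpha_i$ and $\alpha_{i+1}$ are consecutive in $\cap\gL$, the $k_i$ elements of $\cap[\gL\biguplus\gG]$ strictly between them all lie in $\cap\gG$. The paper phrases the computation via the deletion identity $\epsilon(\gL\bs\{\alpha_i\},\gG)=\epsilon(\gL,\gG)-\epsilon(\{\alpha_i\},\gG)$ rather than your explicit telescoping sum, but these are the same bookkeeping.
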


\begin{proof}
If an element is deleted from a reordering, then the only change to the sign of the permutation is that all the transpositions with that element are deleted. So for any $\alpha_i$ $$\epsilon (\gL \bs \{\alpha_i\},\gG)=\epsilon (\gL,\gG)-\epsilon (\{\alpha_i\},\gG).$$ Note that $\epsilon (\{\alpha_i\},\gG)$ is the number of elements in $\gG$ that come before $\alpha_i$. Then 
\begin{align*}
\epsilon(\gL\bs\{\alpha_{i+1}\},\gG) & =\epsilon(\gL,\gG)-\epsilon(\{\alpha_{i+1}\},\gG) \\
& =\epsilon (\gL,\gG)-[\epsilon(\{\alpha_i\},\gG)+k_i] \\
& =\epsilon(\gL\bs \{\alpha_i\},\gG)-k_i.
\end{align*}
\end{proof}

Let the intersection of the union, $\cap [\gL \biguplus \gG]$, be the set $\{\xi_1,\dots ,\xi_s\}$ where the $\xi_i$ are ordered. Define a function $\gs \co [\cap \gL]\cup[\cap\gG] \to [s]$ by $\gs(\alpha_i)=j$ when $\alpha_i=\xi_j$ and $\gs(\gb_s)=t$ when $\gb_s=\xi_t$.

\begin{lemma}\label{alternating}

As $i\in \{1,\dots, x\}$ increases the sum $\gs (\alpha_i)+\epsilon(\gL\bs\{\alpha_i\},\gG)$ alternates its parity.

\end{lemma}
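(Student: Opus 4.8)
The plan is to track exactly how the quantity $\gs(\alpha_i) + \epsilon(\gL \bs \{\alpha_i\}, \gG)$ changes as the index advances from $i$ to $i+1$, and to show that this change is always an odd integer; alternating parity then follows immediately. I would handle the two summands separately and combine them at the end.

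For the sign term, Lemma \ref{inversions} already does all the work. It tells us that
$$\epsilon(\gL \bs \{\alpha_{i+1}\}, \gG) - \epsilon(\gL \bs \{\alpha_i\}, \gG) = -k_i,$$
where $k_i$ is the number of elements strictly between $\alpha_i$ and $\alpha_{i+1}$ in the ordered set $\cap[\gL \biguplus \gG] = \{\xi_1, \dots, \xi_s\}$. For the positional term, I would use that $\gs$ simply records the rank of an element within this same ordered list. Since there are exactly $k_i$ entries strictly between $\alpha_i$ and $\alpha_{i+1}$ in $\{\xi_1,\dots,\xi_s\}$, the ranks satisfy
$$\gs(\alpha_{i+1}) - \gs(\alpha_i) = k_i + 1.$$

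Adding the two differences then gives
$$\big[\gs(\alpha_{i+1}) + \epsilon(\gL \bs \{\alpha_{i+1}\}, \gG)\big] - \big[\gs(\alpha_i) + \epsilon(\gL \bs \{\alpha_i\}, \gG)\big] = (k_i + 1) + (-k_i) = 1,$$
which is odd regardless of the value of $k_i$. Hence consecutive terms of the sequence $\gs(\alpha_i) + \epsilon(\gL\bs\{\alpha_i\},\gG)$ differ by an odd integer, so its parity alternates, as claimed. I do not anticipate a genuine obstacle: the computation is short once Lemma \ref{inversions} is in hand. The one point demanding care is verifying that the $k_i$ appearing in Lemma \ref{inversions} is literally the same count governing $\gs(\alpha_{i+1}) - \gs(\alpha_i)$; this holds precisely because compatibility of $\gL$ and $\gG$ gives $\cap[\gL \biguplus \gG] = [\cap\gL] \cup [\cap\gG]$, so both statements refer to the identical ambient ordered set $\{\xi_1,\dots,\xi_s\}$.
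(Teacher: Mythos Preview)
Your argument is correct and uses the same two ingredients as the paper---Lemma~\ref{inversions} for the change in $\epsilon$, and the observation that $\gs(\alpha_{i+1})-\gs(\alpha_i)=k_i+1$---but you combine them more efficiently. The paper proceeds by a parity case analysis (eight cases according to the parities of $\gs(\alpha_i)$, $\epsilon(\gL\bs\{\alpha_i\},\gG)$, and $\gs(\alpha_{i+1})$), working out one representative case and leaving the rest to the reader. Your direct computation that the consecutive difference is exactly~$1$ subsumes all eight cases at once and makes the alternation immediate; it is the cleaner route and nothing is lost.
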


\begin{proof}

There are many cases for this Lemma. We will prove just one and note that all the others are just permutations of the words ``even" and ``odd".  Suppose that $\gs (\alpha_i)$, $\epsilon(\gL\bs \{\alpha_i\},\gG)$, and $\gs(\alpha_{i+1})$ are even. We want to show that $\epsilon(\gL\bs \{\alpha_{i+1}\},\gG)$ is odd. Since $\gs(\alpha_i)$ and $\gs (\ga_{i+1})$ are both even the number of elements between them in the union $\gL \biguplus \gG$, which is $k_i$ in the notation of Lemma \ref{inversions}, is odd. Then using Lemma \ref{inversions} $\epsilon(\gL\bs\{\alpha_{i+1}\},\gG)=\epsilon(\gL\bs \{\alpha_i\},\gG)-k_i$ is an even number subtract an even results in an even number. The other seven cases are very similar.\end{proof}

\begin{remark}\label{symmitricity}Lemmas \ref{inversions} and \ref{alternating} are symmetric over the inputs of the sign function $\epsilon( - , - )$. For example the corresponding statement of Lemma \ref{alternating} is that the parity of the sum $\gs (\beta_i)+\epsilon(\gL,\gG\bs \{\beta_i\})$ alternates.\end{remark}

\begin{theorem}\label{Leibniz}

The differential satisfies the Leibniz rule $$\delta(b_\gL b_\gG)=\gd(b_\gL)b_\gG +(-1)^{|\cap \gL |} b_\gL \gd (b_\gG )$$ whenever the product $b_\gL b_\gG$ is non-zero or $|\gL |\neq |\gG |$ or $[\cup\gL]\cap[\cup\gG]\subseteq [\cap \gL]\cap [\cap \gG]$.

\end{theorem}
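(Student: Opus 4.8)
The plan is to check the identity on simple generators $b_\gL$ and $b_\gG$ and extend bilinearly, organizing the argument by whether the product $b_\gL b_\gG$ vanishes. Throughout I would write $\cap\gL=\{\alpha_1<\cdots<\alpha_x\}$ and $\cap\gG=\{\gb_1<\cdots<\gb_y\}$, so that, in the abuse of notation $b_{\gL\bs\ga}=b_{(\cap\gL\bs\ga,\hat\gL)}$, the simplicial differential reads $\gd b_\gL=\sum_i(-1)^i b_{\gL\bs\alpha_i}$ and $\gd b_\gG=\sum_j(-1)^j b_{\gG\bs\gb_j}$. The key structural fact I would use repeatedly is that $\gd$ only deletes elements of the common intersection and never alters the number of sets, and that deleting $\alpha_i\in\cap\gL$ removes $\alpha_i$ from $\cup\gL$ as well, since $\alpha_i$ lies in every set of $\gL$.

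The computational heart is the case where $\gL$ and $\gG$ are compatible with $|\gL|=|\gG|=r\ge 1$, so $b_\gL b_\gG=(-1)^{\epsilon(\gL,\gG)}b_{\gL\biguplus\gG}\neq 0$. Here $\cup\gL$ and $\cup\gG$ are disjoint, so $\cap(\gL\biguplus\gG)=\cap\gL\cup\cap\gG$ and the hatted parts are $\hat\gl_i\cup\hat\gg_i$. I would expand $\gd(b_\gL b_\gG)=(-1)^{\epsilon(\gL,\gG)}\gd b_{\gL\biguplus\gG}$ and split the differential's sum over $\cap(\gL\biguplus\gG)$ into the terms deleting some $\alpha_i$ and those deleting some $\gb_j$. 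Deleting $\alpha_i$ from $\gL\biguplus\gG$ produces the same index $(\gL\bs\alpha_i)\biguplus\gG$ as first deleting $\alpha_i$ from $\gL$ and then multiplying, so only the signs must be reconciled: matching the $\alpha_i$-term requires $\epsilon(\gL,\gG)+\gs(\alpha_i)\equiv i+\epsilon(\gL\bs\alpha_i,\gG)\pmod 2$, which follows from the inversion count established in the proof of Lemma \ref{inversions} together with the parity alternation of Lemma \ref{alternating}. The $\gb_j$-terms are handled identically using the symmetric versions recorded in Remark \ref{symmitricity}, and the global shift by $(-1)^{|\cap\gL|}$ is exactly what accounts for all of $\cap\gL$ preceding $\cap\gG$ in the merged intersection. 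This reassembles the first group into $\gd(b_\gL)\,b_\gG$ and the second into $(-1)^{|\cap\gL|}b_\gL\,\gd(b_\gG)$.

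Next I would dispatch the vanishing cases. If $|\gL|\neq|\gG|$ with both nonempty, then $\gL,\gG$ are incompatible and $b_\gL b_\gG=0$; since $\gd$ preserves the number of sets, every $\gL\bs\alpha_i$ still has $r$ sets and every $\gG\bs\gb_j$ still has $s$ sets, so each product occurring in $\gd(b_\gL)b_\gG$ and $b_\gL\gd(b_\gG)$ is again zero and both sides vanish. The unit cases $\gL=\emptyset$ or $\gG=\emptyset$ are immediate, since then the corresponding differential is zero and $b_\emptyset$ acts as the identity.

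Finally, for $|\gL|=|\gG|$ with $\emptyset\neq[\cup\gL]\cap[\cup\gG]\subseteq[\cap\gL]\cap[\cap\gG]$ we have $b_\gL b_\gG=0$, and I must show the right-hand side also vanishes. Writing $[\cup\gL]\cap[\cup\gG]=\{c_1,\dots,c_m\}$ and using $\cup(\gL\bs\alpha_i)=\cup\gL\bs\{\alpha_i\}$, a term $b_{\gL\bs\alpha_i}b_\gG$ survives only when deleting $\alpha_i$ empties the overlap, that is only when $m=1$ and $\alpha_i=c_1$, and similarly for the $\gG$-terms. Hence for $m\ge 2$ every term is zero. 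For $m=1$, say $c_1=\alpha_{a_0}=\gb_{b_0}$, exactly one term survives in each of $\gd(b_\gL)b_\gG$ and $b_\gL\gd(b_\gG)$, both equal to $\pm b_{\gL\biguplus\gG}$; computing the two signs via the inversion formula of Lemma \ref{inversions} shows their exponents differ by an odd number, so the surviving terms cancel. I expect this last cancellation to be the main obstacle, as it is the only place the hypothesis $[\cup\gL]\cap[\cup\gG]\subseteq[\cap\gL]\cap[\cap\gG]$ is genuinely used and where the sign bookkeeping is delicate; the three cases together establish Theorem \ref{Leibniz}.
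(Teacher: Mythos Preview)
Your proposal is correct and follows essentially the same approach as the paper: both split into the same three cases (nonzero product, unequal cardinalities, and overlap contained in the common intersections), expand $\gd(b_\gL b_\gG)$ by separating the $\alpha_i$- and $\gb_j$-deletions, and appeal to Lemmas~\ref{inversions} and~\ref{alternating} (and their symmetric counterparts in Remark~\ref{symmitricity}) to control the signs, with the $m=1$ cancellation handled by an explicit inversion count. The only place your sketch is lighter than the paper is the determination of the global sign $(-1)^{|\cap\gL|}$ on the $\gb$-side, where the paper carries out an explicit computation of $\gs(\gb_1)+\epsilon(\gL,\gG\setminus\{\gb_1\})+1$ rather than invoking a heuristic about $\cap\gL$ preceding $\cap\gG$; you would need to fill in that step, but the method you indicate is the right one.
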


\begin{proof} We show that it commutes with the differential for simple elements $b_\gL$ and $b_\gG$ and then extend linearly. Assume that the product $b_{\gL}b_{\gG}$ is non-zero so that the sets $\gL$ and $\gG$ are compatible. Then

\begin{align*} 
\gd (b_\gL b_\gG) & = \gd \left((-1)^{\epsilon(\gL,\gG)}b_{\gL \biguplus \gG} \right)\\
& = (-1)^{\epsilon (\gL,\gG)}\sum\limits_{\xi_i\in \cap [\gL \biguplus \gG]}(-1)^ib_{[\gL\biguplus\gG]\bs \xi_i}\\
& = (-1)^{\epsilon (\gL,\gG)}\left[ \sum\limits_{i=1}^x(-1)^{\gs(\ga_i)}b_{ [\gL\biguplus\gG]\bs \ga_i} +\sum\limits_{j=1}^y (-1)^{\gs(\gb_j)}b_{[\gL\biguplus\gG]\bs \gb_j }\right] \\
& =  (-1)^{\epsilon (\gL,\gG)}\left[\sum\limits_{i=1}^x(-1)^{\gs(\ga_i)+\epsilon (\gL\bs \ga_i,\gG)}(b_{ \gL\bs \ga_i} b_\gG )\right.   \\
& \hspace{1.5in} \left. +\sum\limits_{j=1}^y (-1)^{\gs(\gb_j)+\epsilon (\gL,\gG\bs\gb_j)}(b_\gL b_{\gG\bs \gb_j })\right] 
\end{align*}

Lemma \ref{alternating} says exactly that the sums above alternate, but it does not say at which sign they start. So if we factor out the first sign of each sum plus 1 then they will both start at negative 1 which is the definition of the differential. So, the expression becomes 

\begin{multline}\label{factored} 
\begin{aligned}
\gd (b_\gL b_\gG)= (-1)^{\epsilon (\gL,\gG)}\left[(-1)^{\gs (\alpha_1)+\epsilon (\gL\bs\{\alpha_1\},\gG)+1}\sum\limits_{i=1}^x(-1)^{i}(b_{ \gL\bs \ga_i} b_\gG )\right.\\
\hspace{1in} \left. +(-1)^{\gs (\beta_1)+\epsilon (\gL,\gG\bs \{\beta_1\})+1}\sum\limits_{j=1}^y (-1)^{j}(b_\gL b_{\gG\bs \gb_j })\right]
\end{aligned}
\end{multline}

Notice that $\gs (\alpha_1)-1=\epsilon (\{\alpha_1\},\gG)$. Thus $\gs(\alpha_1)+\epsilon (\gL\bs \{\alpha_1\},\gG)=\epsilon (\gL,\gG)+1$. The second summation term of \ref{factored} is a little more complicated. However we can still decompose via Lemma \ref{inversions}: 

\begin{equation}\label{decomposebeta}
\gs(\beta_1)+\epsilon (\gL,\gG\bs \{\beta_1\})+1= \gs (\beta_1)+\epsilon (\gL,\gG)-\epsilon (\gL,\{\beta_1\})+1.
\end{equation} 

Then note that 
$$\gs (\beta_1)=x-\{ \text{the number of elements from $\gL$  greater than $\beta_1$}\} +1,$$ 
where $x$ is the size of the intersection of $\gL$: $x=|\cap \gL|$. Since 
$$\{ \textrm{the number of elements from $\gL$ greater than } \beta_1 \}=\epsilon(\gL,\{\beta_1\})$$
the expression in Equation \ref{decomposebeta}, up to parity, is 

\begin{align*}
\gs(\beta_1)+\epsilon (\gL,\gG\bs \{\beta_1\})+1& = x-\epsilon (\gL,\{\beta_1\})+1+\epsilon(\gL,\gG)-\epsilon(\gL,\{\beta_1\})+1\\
&= x-2\epsilon(\gL,\{\beta_1\})+\epsilon(\gL,\gG)+2\\
&= x+\epsilon(\gL,\gG)
\end{align*} 

Now putting these signs into Equation \ref{factored} we get 

\begin{align}
\begin{aligned}
\gd (b_\gL b_\gG)&= (-1)^{\epsilon (\gL,\gG)}\left[(-1)^{\epsilon (\gL,\gG)+2}\sum\limits_{i=1}^x(-1)^{i}(b_{ \gL\bs \ga_i} b_\gG )\right.\\
& \hspace{1in} \left. +(-1)^{x+\epsilon (\gL,\gG)}\sum\limits_{j=1}^y (-1)^{j}(b_\gL b_{\gG\bs \gb_j })\right]\\
&=\sum\limits_{i=1}^x(-1)^{i}(b_{ \gL\bs \ga_i} b_\gG )+(-1)^x\sum\limits_{j=1}^y (-1)^{j}(b_\gL b_{\gG\bs \gb_j })\\
&=\left(\sum\limits_{i=1}^x(-1)^{i}b_{ \gL\bs \ga_i}\right) b_\gG +(-1)^xb_\gL\left(\sum\limits_{j=1}^y (-1)^{j}b_{\gG\bs \gb_j }\right)\\
&= \gd(b_\gL)b_\gG +(-1)^x b_\gL \gd (b_\gG ). \\
\end{aligned}
\end{align}

Now suppose that the product $b_\gL b_\gG$ is zero. If $|\gL |\neq |\gG |$ then this property will persist in all the products in the expansion of the Leibniz formula. Hence in those cases both sides of the formula are zero. 

Next assume that  $[\cup\gL]\cap[\cup\gG]\subseteq [\cap \gL]\cap [\cap \gG]$. The only case we have to worry about is when $\left[ \cap \gL\right] \cap \left[ \cap \gG\right]=\{p\}$ is just one element. If this intersection was larger then the products in the expansion of the right side of the Leibniz rule would trivially be zero. We need to show that in the expansion of the Leibniz formula for the differential $\gd$ that the term on the left cancels with the term on the right since all the other products will be zero. Suppose that $\alpha_i=\beta_j=p$ are the locations of the elements in the intersection. The sign of the non-zero term on the left side is $(-1)^{i+\epsilon (\gL\bs \{\alpha_i\},\gG)}$ where the sign on the right is $(-1)^{x+j+\epsilon (\gL,\gG\bs\{\beta_j\})}$. Since there are $x-i$ many inversions in $\gL\bs \{\alpha_i\}$ with $\beta_j$ we have that $\epsilon (\gL\bs\{\alpha_i\},\gG)=\epsilon (\gL\bs\{\alpha_i\},\gG\bs\{\beta_j\})+x-i.$ Similarly since there are $j-1$ elements of $\gG\bs\{\beta_j\}$ less than $\beta_j$ we have that $\epsilon (\gL,\gG\bs\{\beta_j\})=\epsilon (\gL\bs\{\alpha_i\},\gG\bs\{\beta_j\})-(j-1).$ Now plugging this into the formulas for the signs we have the left hand sign is 
$$(-1)^{\epsilon (\gL\bs\{\alpha_i\},\gG\bs\{\beta_j\})+x}$$
and the right hand sign is 
$$(-1)^{\epsilon (\gL\bs\{\alpha_i\},\gG\bs\{\beta_j\})+x+1}.$$ 
Since the two products will be exactly the same and the signs are off by a negative we have that the expansion of the Leibniz formula is also zero. These are all the cases where the Leibniz formula holds. It turns out that it does not hold for all zero divisors.\end{proof}

Let $\gL=\{\{1,2\},\{1,3\}\}$ and $\gG=\{\{2,4\},\{2,5\}\}$. Then certainly $b_\gL b_\gG=0$. But in the Leibniz formula 

\begin{align*}
\gd (b_\gL)b_\gG+(-1)^2b_\gL\gd(b_\gG) & =[-b_{\{\{2\},\{3\}\}}]b_\gG+b_\gL[-b_{\{\{4\},\{5\}\}}] \\
& =b_\gL[-b_{\{\{4\},\{5\}\}}] \\
& =b_{\{\{1,2,4\},\{1,3,5\}\}}\neq 0.
\end{align*} 
Hence the Leibniz formula does not hold in this case.

\begin{remark} We have shown that $(\mathfrak{A}_*,\gd)$ is very close to being a dga, but this last example shows that it is not. Since the homology with respect to $\gd$ is zero, from the perspective of homology, the failings of the Leibniz rule are not of great importance.
\end{remark}

\begin{remark}

This product also does not satisfy the Leibniz formula for the differential $d$. Let $\Lambda = \{ \{ 1,2 \} , \{ 2,3\} , \{ 3,4 \} \}$ and $\Gamma = \{ \{ 6 \} , \{ 7 \} \}$. Then $b_\gL b_\gG=0$ since the number of subsets are not the same. So $d(b_\gL b_\gG)=0$. However $$\pm d(b_\gL)b_\gG\pm b_\gL d(b_\gG)=\pm b_{\{\{1,2\},\{3,4\}\}}b_\gG\pm 0=\pm b_{\{\{1,2,6\},\{3,4,7\}\}}\neq 0.$$

\end{remark}


\bibliographystyle{amsplain}
\bibliography{limits}

\end{document}